\newcommand{\IN}{\mathbb{N}}
\newcommand{\IZ}{\mathbb{Z}}
\newcommand{\isomorphic}{\cong}
\DeclareMathOperator{\Hom}{Hom}
\DeclareMathOperator{\Irr}{Irr}
\DeclareMathOperator{\ord}{ord}
\newcommand{\captionstringtheorem}{Theorem}
\newcommand{\captionstringlemma}{Lemma}
\newcommand{\captionstringcorollary}{Corollary}
\newcommand{\captionstringlemmadef}{Lemma and definition}
\newcommand{\captionstringtheoremdef}{Theorem and definition}
\newcommand{\captionstringdefinition}{Definition}
\newcommand{\captionstringproposition}{Proposition}
\newcommand{\captionstringexample}{Example}
\newcommand{\captionstringconjecture}{Conjecture}
\newcommand{\captionstringconvention}{Convention}
\newtheoremstyle{dotless} 
			{\bigskipamount}    
			{\bigskipamount}    
			{\nopagebreak}      
			{}                  
			{\bfseries}         
			{:}                 
			{\newline}          
			{}                  
\newtheoremstyle{dotless2} 
			{\bigskipamount}    
			{0.0em}             
			{}                  
			{}                  
			{\bfseries}         
			{:}                 
			{0.5em}             
			{}                  
\newcounter{theoremnumber}
\numberwithin{theoremnumber}{section}
\theoremstyle{dotless}
\newtheorem{theorem}[theoremnumber]{\captionstringtheorem}
\newtheorem{lemma}[theoremnumber]{\captionstringlemma}
\newtheorem{definition}[theoremnumber]{\captionstringdefinition}
\newtheorem{example}[theoremnumber]{\captionstringexample}
\newtheorem{conjecture}[theoremnumber]{\captionstringconjecture}
\newtheorem*{convention}{\captionstringconvention}
\theoremstyle{dotless2}
\newtheorem{remark}[theoremnumber]{}
\newcommand{\titleOfThisDocument}{A Reduction theorem for the $W$-graph decomposition conjecture}
\author{Johannes Hahn}
\title{\titleOfThisDocument}
\begin{document}
\maketitle

\begin{abstract}
Let $W$ be a finite Coxeter group and $\Omega$ be its $W$-graph algebra as defined by Gyoja. The author's previous paper \cite{hahn2016wgraphs} considered this algebra in some detail, proposed, and proved in some small cases the $W$-graph decomposition conjecture.
The purpose of the current paper is to prove a reduction theorem for (a slightly stronger version of) that conjecture to indecomposable Coxeter groups in the sense that the conjecture is true for $W=W_1\times W_2$ if it holds for $W_1$ and $W_2$.
\end{abstract}

\section{Introduction}

Let $(W,S)$ be a finite Coxeter group and $\Omega=\Omega(W,S)$ its $W$-graph algebra as first introduced by Gyoja in \cite{gyoja}. The author's first paper \cite{hahn2016wgraphs} contained a more detailed investigation this algebra and its properties. Its distinguishing feature is that its modules correspond (up to some minor technical details) to $W$-graphs if $W$ is finite: Every $W$-graphs induces an $\Omega$-module and after choosing an appropriate basis every $\Omega$-module is a $W$-graph.

One of the main results of \cite{hahn2016wgraphs} is that $\Omega$ is actually a quotient of the path algebra over the so called compatibility graph $\mathcal{Q}_W$ (as defined in \cite{stembridge2008admissble}) with some very explicit relations. In particular there is a natural decomposition of the unit of $\Omega$ into a sum of orthogonal idempotents corresponding to the vertices in $\mathcal{Q}_W$.

The $W$-graph decomposition conjecture proposes that there should be another, but compatible decomposition into idempotents which induces a natural filtration on every $\Omega$-module such that the quotients of this filtration are semisimple and contain irreducibles of only one isomorphism class. In this sense this filtration realizes the decomposition of the module into its irreducible constituents in a natural way, hence the name of the conjecture.

Although the conjecture seems to hold for many types of Coxeter group (including all of type $A$) so far it has only been proven by somewhat pedestrian means for the types $I_2(m)$, $A_1$--$A_4$ and $B_3$ in \cite{hahn2016wgraphs}. The purpose of the current paper is to prove a general result saying that (a variation of) the decomposition conjecture holds for a decomposable Coxeter group $W=W_1\times W_2$ if it holds for $W_1$ and $W_2$. For technical reasons a strengthening of the conjecture is necessary for the proof to work. It turns out that the explicit constructions in the previous proofs all prove this stronger form of the conjecture as well so that the results from the previous paper still apply.

This paper is organised as follows: Section \ref{section:preparations} recalls the definitions of Iwahori-Hecke algebras, $W$-graph algebras and certain constructions from \cite{hahn2016wgraphs}.

Section \ref{section:decomposition_conjecture} recalls the decomposition conjecture and states the stronger form needed for this paper. It also introduces the subalgebra $\Psi\subseteq\Omega$ which will be needed for the proof.

Section \ref{section:maintheorem} then states and proves the main theorem of this paper that the strong form of the decomposition conjecture holds for $W=W_1\times W_2$ if it holds for $W_1$ and $W_2$.

\section{Preparations}\label{section:preparations}

\begin{convention}
For the rest of the paper fix a (not necessarily finite) Coxeter group $(W,S)$ and a \enquote{good} ring in the sense of \citep[definition 1.5.9]{geckjacon}, i.e. $2\cos(\frac{2\pi}{\ord(st)})\in k$ for all $s,t\in S$.
\end{convention}

\begin{definition}[c.f. \cite{geckjacon}]
The \emph{Iwahori-Hecke algebra} $H=H(W,S)$ is the $k[v^{\pm 1}]$-algebra  which is freely generated by $(T_s)_{s\in S}$ subject only to the relations
\[T_s^2 = 1 + (v-v^{-1}) T_s\quad\textrm{and}\]
\[\underbrace{T_s T_t T_s \ldots}_{m_{st}\,\text{factors}} = \underbrace{T_t T_s T_t \ldots}_{m_{st}\,\text{factors}}\]
where $m_{st}$ denotes the order of $st\in W$.

For each parabolic subgroup $W_J\leq W$ the Hecke algebra $H(W_J,J)$ will be identified with the \emph{parabolic subalgebra} $H_J := \operatorname{span}_{k[v^{\pm1}]}\Set{T_w | w\in W_J}\subseteq H$.
\end{definition}

\begin{definition}[c.f. \cite{KL} and \cite{geckjacon}]
A \emph{$W$-graph with edge weights in $k$} is a triple $(\mathfrak{C},I,m)$ consisting of a finite set $\mathfrak{C}$ of \emph{vertices}, a \emph{vertex labelling} map $I: \mathfrak{C}\to \{J \mid J\subseteq S\} $ and a family of \emph{edge weight} matrices $m^s\in k^{\mathfrak{C}\times\mathfrak{C}}$ for $s\in S$ such that the following conditions hold:
\begin{enumerate}
	\item $\forall x,y\in\mathfrak{C}: m_{xy}^s \neq 0 \implies s\in I(x)\setminus I(y)$.
	\item The matrices
	\[\omega(T_s)_{xy} := \begin{cases} -v^{-1}\cdot 1_k & \textrm{if}\;x=y, s\in I(x) \\ v\cdot 1_k & \textrm{if}\;x=y, s\notin I(x) \\ m_{xy}^s & \textrm{otherwise}\end{cases}\]
	induce a matrix representation $\omega: H\to k[v^{\pm1}]^{\mathfrak{C}\times\mathfrak{C}}$.
\end{enumerate}
The associated directed graph is defined as follows: The vertex set is $\mathfrak{C}$ and there is a directed edge $x\leftarrow y$ if and only if $m_{xy}^s\neq 0$ for some $s\in S$. If this is the case, then the value $m_{xy}^s$ is called the \emph{weight} of the edge (and in fact does not depend on $s$). The set $I(x)$ is called the \emph{vertex label} of $x$.
\end{definition}

\begin{remark}
Note that condition a. and the definition of $\omega(T_s)$ already guarantees $\omega(T_s)^2=1+(v-v^{-1})\omega(T_s)$ holds so that the only non-trivial requirement in condition b. is the braid relation $\omega(T_s)\omega(T_t)\omega(T_s)\ldots = \omega(T_t)\omega(T_s)\omega(T_t)\ldots$.
\end{remark}

Given a $W$-graph as above the matrix representation $\omega$ turns $k[v^{\pm1}]^{\mathfrak{C}}$ into a $H$-module. It is natural to ask whether a converse is true. In many situations the answer is yes as shown by Gyoja.
\begin{theorem}[c.f. \cite{gyoja}]
Let $W$ be finite, $K\subseteq\mathbb{C}$ be a splitting field for $W$. Then every irreducible representation of $K(v)H$ can be realized as a $W$-graph module for some $W$-graph with edge weights in $K$.
\end{theorem}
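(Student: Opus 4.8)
The plan is to recognise a $W$-graph basis as a bar-invariant basis and to build it by a Kazhdan--Lusztig-type fixed-point construction carried out inside the simple module itself.

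Set-up. Since $K$ is a splitting field for the finite group $W$ it is in particular a good ring, so Tits' deformation theorem applies: $K(v)H$ is split semisimple and its simple modules $V_E$ are parametrised by $E\in\Irr(W)$ through the specialisation $v\mapsto 1$. Fix $V=V_E$, of dimension $d=\dim E$, together with a full $K[v^{\pm1}]$-lattice $L\subseteq V$ stable under $H$. Unwinding the definition of a $W$-graph, the task is to produce a $K[v^{\pm1}]$-basis $\mathfrak C$ of $L$ and labels $I\colon\mathfrak C\to\{J\subseteq S\}$ so that each $T_s$ acts by a matrix whose diagonal entry is $-v^{-1}$ on the $x$ with $s\in I(x)$ and $v$ elsewhere, and whose off-diagonal entries lie in $K$ and vanish off the pairs $(x,y)$ with $s\in I(x)\setminus I(y)$; by the remark after the definition, such a basis automatically satisfies the braid relations, so $(\mathfrak C,I,m)$ is then a $W$-graph and $K(v)\otimes L\cong V$.

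Mechanism. The bar automorphism $\iota$ of $H$ ($v\mapsto v^{-1}$, $T_s\mapsto T_s^{-1}$) fixes the character of every $V_E$ — the irreducible characters of $H$ are invariant under $v\mapsto v^{-1}$, by Tits' theorem and a standard computation — so $V\cong\iota^{\ast}V$, and this yields an $\iota$-semilinear involution $\beta\colon V\to V$ with $\beta(h\xi)=\iota(h)\beta(\xi)$ and, after rescaling (Hilbert~90 for the quadratic extension $K(v)/K(v+v^{-1})$), $\beta^2=\mathrm{id}$. One checks directly that any basis of the $W$-graph shape above is fixed by $\beta$, so the plan is to single out such a basis by the usual fixed-point argument: starting from a ``standard'' $K[v^{-1}]$-basis $\{e_x\}$ of a $\beta$-stable lattice, with a total order for which $\beta(e_x)\in e_x+\sum_{y<x}v^{-1}K[v^{-1}]\,e_y$ and for which each $T_s e_x$ is an eigenvalue times $e_x$ plus a strictly lower combination, one obtains a unique basis $\{c_x\}$ with $\beta(c_x)=c_x$ and $c_x\in e_x+\sum_{y<x}v^{-1}K[v^{-1}]\,e_y$; the vertex labels $I(x)$ are then read off from the leading behaviour of $T_s$ on $c_x$, and a degree count in $v^{-1}$ — exactly as in the Kazhdan--Lusztig recursion — shows that the off-diagonal entries of the matrices of the $T_s$ in the basis $\{c_x\}$ are $v$-independent scalars, supported on $s\in I(x)\setminus I(y)$, and that the a priori denominators $v+v^{-1}$ cancel.

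The main obstacle is the construction of the standard basis $\{e_x\}$ for an \emph{arbitrary} simple module $V_E$. If $V_E$ is a Kazhdan--Lusztig left-cell module the basis is simply inherited from the standard basis $\{T_w\}$ of $H$ (the $c_x$ are then the Kazhdan--Lusztig basis vectors and the edge weights the $\mu$-invariants), but in general $V_E$ is only a constituent of such a module and $\{e_x\}$ has to be produced by other means — e.g.\ by descending from the regular representation through the cell filtration while tracking $\beta$ and the filtration order, or by passing to the degenerate algebra at $v=\infty$ (the $0$-Hecke algebra, whose representation theory is completely explicit) and lifting a basis back by successive approximation. Either route needs genuine control of the $v$-adic combinatorics rather than soft properties of the generic algebra, and it is also at this stage that one must check the integrality of the edge weights, i.e.\ that they land in $K$ and not merely in $K(v)$.
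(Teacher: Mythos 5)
This statement is not proved in the paper at all: it is quoted from Gyoja's article (the paper only records it with a citation), and the proof given there runs through the $W$-graph algebra $\Omega$ itself --- one extends an irreducible $K(v)H$-module to a module over $\Omega$ (the elements $e_s$ arise as the spectral projections of $T_s$ onto the $-v^{-1}$-eigenspace, and $x_s:=T_s+v^{-1}e_s-v(1-e_s)$), and a rationality/descent argument shows the resulting $\Omega$-module can be realized over $K$; simultaneous diagonalization of the commuting idempotents $e_s$ then produces the $W$-graph. Your bar-involution, Kazhdan--Lusztig-style fixed-point strategy is a genuinely different route, but as written it does not constitute a proof.

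The decisive gap is the one you yourself flag at the end: the existence of a $\beta$-stable lattice with a ``standard'' basis $\{e_x\}$, totally ordered so that \emph{every} $T_s$ acts triangularly with diagonal eigenvalue $v$ or $-v^{-1}$ and with the required $v^{-1}$-adic triangularity of $\beta$. For the regular representation (or a cell module) this structure is handed to you by the basis $\{T_w\}$, but for an arbitrary irreducible constituent it is essentially equivalent to what the theorem asserts, so the argument is circular at its core. Moreover, even granting such a basis, the step ``a degree count shows the off-diagonal entries of the $T_s$ in the canonical basis are $v$-independent scalars in $K$, supported on pairs with $s\in I(x)\setminus I(y)$'' does not follow from bar-invariance plus triangularity alone: in Kazhdan--Lusztig theory this uses the specific integral form and recursion available for the standard basis of $H$, and for a general $\beta$-stable lattice nothing prevents genuine Laurent-polynomial entries. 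Two smaller points: the rescaling to force $\beta^2=\mathrm{id}$ is not a direct application of Hilbert 90 (Hilbert 90 describes norm-one elements, whereas you need the Schur scalar $\beta^2\in K(v+v^{-1})^\times$ to be a norm from $K(v)$), and the integrality of the edge weights in $K$ (rather than $K(v)$) is exactly the rationality issue that Gyoja's descent argument is designed to handle, which your sketch defers without a mechanism.
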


\begin{remark}
Gyoja also provides an example of a finite-dimensional representation of the affine Weyl group of type $\tilde{A}_n$ that is not induced by a $W$-graph.
\end{remark}

\begin{remark}
An auxiliary object in Gyoja's proof and the object of interest in this and the author's previous papers is the so called $W$-graph algebra.
\end{remark}

\begin{definition}[{Gyoja's $W$-graph algebra (c.f. \citep[definition 2.4]{gyoja})}]
Consider the free algebra $\IZ\langle e_s, x_s | s\in S\rangle$. Define
\[\iota(T_s) := -v^{-1} e_s + v (1-e_s) + x_s \in \IZ\langle e_s, x_s \rangle[v^{\pm1}]\]
for all $s\in S$ and write their \emph{braid-commutator}
\[\Delta_m(\iota(T_s),\iota(T_t)) :=  \underbrace{\iota(T_s)\iota(T_t)\iota(T_s)\ldots}_{m_{st}\,\text{factors}} - \underbrace{\iota(T_t)\iota(T_s)\iota(T_t)\ldots}_{m_{st}\,\text{factors}}\]
as
\[\sum_{\gamma\in\IZ} y^\gamma(s,t) v^\gamma\]
for some (uniquely determined) $y^\gamma(s,t)\in\IZ\langle e_s, x_s \rangle$.

\medbreak
Define $\Omega$ to be the quotient of $\IZ\langle e_s, x_s \rangle$ modulo the relations
\begin{enumerate}
	\item $e_s^2=e_s$, $e_s e_t = e_t e_s$,
	\item $e_s x_s=x_s$, $x_s e_s = 0$,
	\item $y^\gamma(s,t) = 0$
\end{enumerate}
for all $s,t\in S$ and $\gamma\in\IZ$.

$\Omega=\Omega(W,S)$ is called the \emph{$W$-graph algebra}.
\end{definition}

\begin{remark}
This is a definition in the equal parameter case. There are more general definitions of $W$-graph algebras that cover the case of non-equal parameters. The author has previously given one such definition in \cite{hahn2017canon}.
\end{remark}

\begin{remark}
The definition immediately implies that $T_s\mapsto\iota(T_s)$ defines a homomorphism of $k[v^{\pm1}]$-algebras $\iota: H\to k[v^{\pm1}]\Omega$. In fact this is an embedding as shown in \citep[corollary 10]{hahn2016wgraphs}.

$W$-graph algebras have the distinguishing feature that each $W$-graph $(\mathfrak{C},I,m)$ with edge weights in $k$ not only defines the structure of a $H$-module on $k[v^{\pm1}]^{\mathfrak{C}}$ but that it induces a canonical $k\Omega$-module structure on $k^\mathfrak{C}$ via
\[e_s \cdot \mathfrak{z} := \begin{cases} \mathfrak{z} & s\in I(\mathfrak{x}) \\ 0 & \text{otherwise}\end{cases}\]
\[x_s \cdot \mathfrak{z} := \sum_{\mathfrak{x}\in\mathfrak{C}} m_{\mathfrak{xz}}^s \mathfrak{x}\]
for all $\mathfrak{z}\in\mathfrak{C}$. Then $k[v^{\pm1}]^{\mathfrak{C}\times\mathfrak{C}}$ is a $k[v^{\pm1}]\Omega$-module and restriction to a $H$-module gives back the $H$-module in the definition.

Conversely if $V$ is a $k\Omega$-module that has a $k$-basis $\mathfrak{C}$ w.r.t. which all $e_s$ act as diagonal matrices, then $V$ is obtained from a $W$-graph $(\mathfrak{C},I,m)$ in this way. In this way one can interpret $\Omega$-modules as $W$-graphs up to choice of a basis.
\end{remark}

\begin{example}
The trivial group is a Coxeter group $(1,\emptyset)$ and its associated $W$-graph algebra is just $\IZ$.

A cyclic group of order 2 is a Coxeter group $(\set{1,s},\set{s})$ of rank 1 and its associated $W$-graph algebra is free as a $\IZ$-module with basis $\Set{e_s, 1-e_s,x_s}$. The multiplication of the basis elements is completely determined by the relations because $x_s x_s = x_s (e_s x_s) = (x_s e_s) x_s = 0$.
\end{example}

\begin{remark}
Being defined by generators and relations, $\Omega$ satisfies a universal mapping property which we will now state in a slightly different, more convenient form.
\end{remark}

\begin{lemma}\label{omega:universal_mapping_property}
Consider the category of rings. Then pre-composing with the quotient map $\IZ\langle e_s, x_s \mid s\in S\rangle\twoheadrightarrow\Omega$ is a natural isomorphism
\[\Hom(\Omega, A) \isomorphic \Big\lbrace (\tilde{e}_s, \tilde{x}_s)_{s\in S} \in \prod_{s\in S} A^2 \Big| \begin{array}{l}
\tilde{e}_s,\tilde{x}_s\text{ statisfy relations a. and b. and} \\
\text{the induced map }\IZ\langle e_s, x_s\rangle[v^{\pm1}] \to A[v^{\pm1}] \\
\text{annihilates } \Delta_{m_{st}}(\iota(T_s),\iota(T_t)) \text{ for all }s,t\in S\end{array} \Big\rbrace.\]
\end{lemma}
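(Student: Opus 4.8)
The plan is to derive the statement from the two universal properties in play: that of the free associative ring $\IZ\langle e_s,x_s\mid s\in S\rangle$ and that of a quotient. First I would note that $\IZ\langle e_s,x_s\mid s\in S\rangle$ is the monoid ring over $\IZ$ of the free monoid on the symbols $\{e_s,x_s\mid s\in S\}$, so that evaluation on the generators gives a bijection $\Hom(\IZ\langle e_s,x_s\rangle, A)\isomorphic\prod_{s\in S}A^2$ which is obviously natural in $A$. Precomposition with the quotient map $\pi\colon\IZ\langle e_s,x_s\rangle\twoheadrightarrow\Omega$ identifies $\Hom(\Omega, A)$ with the subset of $\Hom(\IZ\langle e_s,x_s\rangle, A)$ consisting of those $\phi$ that factor through $\pi$; since $\pi$ is surjective with kernel the two-sided ideal generated by the left-hand sides of relations a., b. and c., and since $\phi$ is a ring homomorphism, this holds precisely when $\phi$ annihilates each of those relators individually. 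So everything comes down to translating \enquote{$\phi$ kills the relators a., b., c.} into the three conditions on the right-hand side.

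For a. and b. there is nothing to translate: $\phi$ kills those relators exactly when $\tilde e_s:=\phi(e_s)$ and $\tilde x_s:=\phi(x_s)$ satisfy the corresponding identities in $A$. For c. the only thing to unwind is the appearance of $v$. Extend $\phi$ $\IZ[v^{\pm1}]$-linearly to $\phi[v^{\pm1}]\colon\IZ\langle e_s,x_s\rangle[v^{\pm1}]\to A[v^{\pm1}]$ --- this is the induced map in the statement --- so that $\iota(T_s)\mapsto -v^{-1}\tilde e_s+v(1-\tilde e_s)+\tilde x_s$, and, since $\phi[v^{\pm1}]$ fixes $v$ and $\Delta_{m_{st}}(\iota(T_s),\iota(T_t))=\sum_{\gamma\in\IZ}y^\gamma(s,t)v^\gamma$ by definition,
\[\phi[v^{\pm1}]\big(\Delta_{m_{st}}(\iota(T_s),\iota(T_t))\big)=\sum_{\gamma\in\IZ}\phi\big(y^\gamma(s,t)\big)\,v^\gamma.\]
As $A[v^{\pm1}]$ is free as an $A$-module with basis $(v^\gamma)_{\gamma\in\IZ}$, the right-hand side vanishes if and only if $\phi(y^\gamma(s,t))=0$ for every $\gamma\in\IZ$, i.e.\ if and only if $\phi$ kills all the relators in c. Thus $\phi$ factors through $\pi$ exactly when $(\tilde e_s,\tilde x_s)_{s\in S}$ lies in the displayed set, which gives the desired bijection.

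Naturality in $A$ is then routine: for a ring homomorphism $f\colon A\to A'$, postcomposition with $f$ commutes with precomposition with $\pi$ and matches the evident map $\prod_{s}A^2\to\prod_{s}(A')^2$; moreover $f$ carries tuples satisfying the three conditions to tuples satisfying them (for a. and b. because $f$ is a ring map, for c. because $(f\circ\phi)[v^{\pm1}]=f[v^{\pm1}]\circ\phi[v^{\pm1}]$), so the bijection restricts correctly. I do not expect a genuine obstacle anywhere in this argument; the one place that calls for a little care is the bookkeeping of the scalar extension to $\IZ[v^{\pm1}]$ and the use of its freeness over $\IZ$ (hence of $A[v^{\pm1}]$ over $A$) to replace the single relation $\Delta_{m_{st}}=0$ by the family $y^\gamma(s,t)=0$ --- which is exactly why the correct condition is the conjunction of a., b.\ and the $v$-extended form of c.
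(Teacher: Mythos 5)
Your proof is correct: the identification via the free algebra's universal property, the reduction of factoring through the quotient to killing the relators a., b., c., and the use of the freeness of $A[v^{\pm1}]$ over $A$ (with basis the powers $v^\gamma$) to convert the single condition on $\Delta_{m_{st}}(\iota(T_s),\iota(T_t))$ into the family of relations $y^\gamma(s,t)=0$ is exactly the right bookkeeping. The paper itself gives no argument here but only cites \citep[Lemma 2.4]{hahn2016wgraphs}, and your self-contained proof is the standard one that reference carries out, so there is nothing to add.
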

\begin{proof}
See \citep[Lemma 2.4]{hahn2016wgraphs}.
\end{proof}

\begin{example}[Parabolic morphisms]
A trivial consequence of this lemma is that for every $J\subseteq S$ the \emph{parabolic morphism} $\Omega(W_J,J) \to \Omega(W,S), e_s\mapsto e_s, x_s\mapsto x_s$ is well-defined. It was proven in \cite{hahn2017canon} that this is always an embedding.
\end{example}

\subsection{$\Omega$ as a quotient of a path algebra}

The following definition already appears in Gyoja's paper (c.f. \citep[definition 2.5]{gyoja}) although with a different notation:
\begin{definition}
In $\Omega$ define the following elements for all $I,J\subseteq S, s\in S$:
\[E_I:=\Big(\prod_{t\in I}e_t\Big)\Big(\prod_{t\in S\setminus I}(1-e_t)\Big)\]
\[X_{IJ}^s:=E_I x_s E_J\]
\end{definition}

\begin{definition}[{c.f. \citep[section 4]{stembridge2008admissble}}]
The \emph{compatibility graph} of $(W,S)$ is the directed graph $\mathcal{Q}_W$ with vertex set $\left\lbrace I \mid I\subseteq S\right\rbrace$ and a single edge $I\leftarrow J$ if and only if $I\setminus J\neq\emptyset$ and no element of $I\setminus J$ commutes with an element of $J\setminus I$.

An edge $I\leftarrow J$ with $I\supseteq J$ is called an \emph{inclusion edge}, all other edges (i.e. those with $J\setminus I\neq\emptyset$) are called \emph{transversal edges}.

Note that transversal edges always come in pairs: If $I\leftarrow J$ is transversal, then there is a transversal edge $I\rightarrow J$ as well. The statement that $I,J\subseteq S$ are connected by a pair of transversal edges will be denoted by $I \leftrightarrows J$.
\end{definition}

\begin{remark}
One of the main goals of the author's first paper was to establish that $\Omega$ is in fact a quotient of the path algebra $\IZ\mathcal{Q}_W$ such that $E_I$ is the (image of the) element corresponding to the vertex $I$ and $X_{IJ}^s$ is the (image of the) element corresponding to the edge $I\leftarrow J$ in $\mathcal{Q}_W$. In particular $X_{IJ}^s$ doesn't depend on $s$, only on $I$ and $J$.
\end{remark}

\begin{theorem}[{c.f. \citep[theorem 13]{hahn2016wgraphs}}]
Define polynomials $\tau_r\in\IZ[T]$ by the following recursion:
\[\tau_{-1} :=0,\quad \tau_0 := 1, \quad \tau_r := T\tau_{r-1}-\tau_{r-2}\]
and let $a_{r,i}$ denote its coefficients, i.e. $\tau_r = T^r + a_{r,m-1} T^{m-1} + \ldots + a_{r,1} T + a_{r,0}$.

\medskip
For all $I,J\subseteq S$, $s,t\in S$ and $r\in\IN$ define
\[P_{IJ}^r(s,t) :=  E_I \underbrace{x_s x_t x_s\ldots}_{r\,\textrm{factors}} E_J = \begin{cases} E_I E_J & r=0 \\ \sum\limits_{I_1,\ldots,I_{r-1}\subseteq S} X_{I I_1}^s X_{I_1 I_2}^t X_{I_2 I_3}^s \ldots X_{I_{r-1} J}^s & r>0, 2\nmid r \\ \sum\limits_{I_1,\ldots,I_{r-1}\subseteq S} X_{I I_1}^s X_{I_1 I_2}^t X_{I_2 I_3}^s \ldots X_{I_{r-1} J}^t & r>0, 2\mid r \end{cases}.\]

With this notation $\Omega$ is freely generated by $\Set{E_I, X_{IJ}^s | I,J\subseteq S, s\in S}$ modulo the following relations:
\begin{enumerate}
	\item Quiver-relations:
	\begin{enumerate}
		\item $E_I E_J = \delta_{IJ} E_I$ and $\sum_{I\subseteq S} E_I = 1$,
		\item $E_I X_{IJ}^s = X_{IJ}^s = X_{IJ}^s E_J$,
		\item $X_{IJ}^s = 0$ if $s\notin I\setminus J$,
	\end{enumerate}
	
	\item Additional relations for all $s,t\in S$ with $m=\ord(st)<\infty$:
	\begin{enumerate}
	\item[$(\alpha^{st})$] The relations
	\[0 = \sum_{k=0}^{m-1} a_{m-1,k} P_{IJ}^k(s,t)\]
	for all $I,J\subseteq S$ if either
	\begin{itemize}
		\item $s\in I$, $t\notin I$, $s\in J$, $t\notin J$ and $2\mid m_{st}$ or
		\item $s\in I$, $t\notin I$, $s\notin J$, $t\in J$ and $2\nmid m_{st}$
	\end{itemize}
	holds.
	\item[$(\beta^{st})$] For all $I,J\subseteq S$ with $s,t\in I\setminus J$ the relations
	\[0=P_{IJ}^1(s,t) - P_{IJ}^1(t,s)=P_{IJ}^2(s,t) - P_{IJ}^2(t,s)=\ldots= P_{IJ}^m(s,t) - P_{IJ}^m(t,s).\]
	\end{enumerate}
\end{enumerate}
\end{theorem}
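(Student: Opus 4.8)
The plan is to recast the asserted presentation as an algebra $\Lambda$ and to exhibit mutually inverse homomorphisms between $\Lambda$ and $\Omega$; since these will be visibly inverse to each other on generators, the only real content is checking that they are well defined. Concretely, let $\Lambda$ be the $\IZ$-algebra generated by symbols $E_I$ ($I\subseteq S$) and $X_{IJ}^s$ ($I,J\subseteq S$, $s\in S$) modulo the relations displayed in the theorem. I would define $F\colon\Lambda\to\Omega$ on generators by $E_I\mapsto\prod_{t\in I}e_t\prod_{t\in S\setminus I}(1-e_t)$ and $X_{IJ}^s\mapsto E_Ix_sE_J$ — that is, to the elements already named just before the theorem — and $G\colon\Omega\to\Lambda$ on generators by $e_s\mapsto\sum_{I\ni s}E_I$ and $x_s\mapsto\sum_{I,J\subseteq S}X_{IJ}^s$. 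Using only the quiver relations one checks that $\{E_I\}_{I\subseteq S}$ is a complete system of orthogonal idempotents in $\Lambda$, so that $\prod_{t\in I}\bigl(\sum_{K\ni t}E_K\bigr)\prod_{t\notin I}\bigl(\sum_{K\not\ni t}E_K\bigr)=E_I$ and $E_I\bigl(\sum_{K,L}X_{KL}^s\bigr)E_J=X_{IJ}^s$ there; dually, in $\Omega$ one has $\sum_{I\ni s}E_I=e_s$ and $\sum_{I,J}E_Ix_sE_J=x_s$, the latter because $\sum_IE_I=1$. Hence $GF=\id_\Lambda$ and $FG=\id_\Omega$ on generators, and the theorem reduces to showing that $F$ and $G$ respect the defining relations of their sources.

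I would then clear away the easy relations. For $F$ the quiver relations (1a)--(1c), applied to the elements $F(E_I),F(X_{IJ}^s)$ of $\Omega$, are immediate: relation (a) makes the $E_I$ orthogonal idempotents summing to $1$, and $(1-e_s)x_s=0$ together with $x_se_s=0$ force $E_Ix_sE_J=0$ unless $s\in I\setminus J$. For $G$ the relations (a) and (b) from the definition of $\Omega$, applied to $\sum_{I\ni s}E_I$ and $\sum_{I,J}X_{IJ}^s$, drop out of the quiver relations of $\Lambda$ at once. So everything comes down to the interplay between the braid--commutator relations (c) and the additional relations $(\alpha^{st}),(\beta^{st})$.

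These two are two readings of one computation, which is where the work lies. I would fix $s,t\in S$ with $m=\ord(st)<\infty$, sandwich the braid commutator between two vertex idempotents, and insert $1=\sum_KE_K$ between consecutive factors:
\[E_I\,\Delta_m(\iota(T_s),\iota(T_t))\,E_J=\sum_{K_1,\dots,K_{m-1}}\ (E_I\iota(T_s)E_{K_1})(E_{K_1}\iota(T_t)E_{K_2})\cdots(E_{K_{m-1}}\iota(T_\bullet)E_J)\ -\ (\text{same with }s\text{ and }t\text{ interchanged}),\]
where $\iota(T_\bullet)$ is $\iota(T_s)$ or $\iota(T_t)$ according to the parity of $m$. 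Each factor equals $X_{KL}^s$ when $K\neq L$, and equals $-v^{-1}E_K$ or $vE_K$ according to whether $s\in K$ when $K=L$ (because $E_Kx_sE_K=0$), and symmetrically for $t$; moreover two consecutive $X$'s carrying the \emph{same} label multiply to zero, so only the summands whose non-diagonal steps form an alternating string survive. Hence each surviving monomial collapses to a sign times a power of $v$ — contributed by the diagonal blocks lying between the non-diagonal steps — times a product of consecutively composable $X$'s, i.e.\ a $v$-multiple of some $P_{IJ}^r(s,t)$. Collecting by the power of $v$ exhibits $E_I\Delta_m(\iota(T_s),\iota(T_t))E_J$ as an explicit $\IZ[v^{\pm1}]$-combination of the $P_{IJ}^r(s,t)$ and $P_{IJ}^r(t,s)$ with $0\le r\le m$, and the key point to pin down is that its coefficients are exactly those dictated by the recursion $\tau_r=T\tau_{r-1}-\tau_{r-2}$ — which is precisely the recursion that naturally governs such alternating products of the triangular blocks $\bigl(\begin{smallmatrix}-v^{-1}&\ast\\0&v\end{smallmatrix}\bigr)$ that $\iota(T_s)$ and $\iota(T_t)$ present — so that the extreme $v$-coefficient already produces the combination $\sum_ka_{m-1,k}P_{IJ}^k(s,t)$ and the others produce the differences $P_{IJ}^r(s,t)-P_{IJ}^r(t,s)$. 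Sorting the pairs $(I,J)$ by the membership of $s$ and $t$ in $I$ and in $J$ and by the parity of $m$ — which is what decides on which idempotent the alternating word begins and ends — one should find that the vanishing of all $v$-coefficients of $E_I\Delta_m(\iota(T_s),\iota(T_t))E_J$, ranging over all $I,J$, is in each case either the relation $(\alpha^{st})$ (or $(\alpha^{ts})$), or the chain $(\beta^{st})$, or a relation already implied by the quiver relations. Read from $\Omega$ this yields $(\alpha^{st}),(\beta^{st})$ in $\Omega$, so $F$ is well defined; read from $\Lambda$ and summed over all $I,J$ it yields $\Delta_m(\iota(T_s),\iota(T_t))=0$ in $\Lambda$, so $G$ is well defined. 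That finishes the proof.

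The main obstacle is exactly this last step: showing that the expansion of the alternating length-$m$ product of the triangular blocks reorganizes \emph{precisely} into $\sum_ka_{m-1,k}P_{IJ}^k(s,t)$ together with the matching $(\beta)$-differences, with no surplus terms and with the stated coefficients. This is an elementary but fiddly computation with the polynomials $\tau_r$, complicated by two features. First, $\iota(T_s)$ and $\iota(T_t)$ are triangular with respect to two \emph{different} gradings — by $e_s$ and by $e_t$ — so both gradings must be carried along simultaneously. Second, the parity of $m$ governs whether the alternating word ends in $\iota(T_s)$ or in $\iota(T_t)$, and hence which of the case distinctions inside $(\alpha)$ and $(\beta)$ one actually lands in. Everything else in the argument is routine idempotent algebra.
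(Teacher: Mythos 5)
Note first that this paper does not prove the statement at all: it is quoted from \citep[theorem 13]{hahn2016wgraphs}, so your attempt can only be compared with the strategy of that earlier proof, which is indeed the same in outline — realize the claimed presentation as an algebra $\Lambda$, map it to $\Omega$ via $E_I\mapsto\prod_{t\in I}e_t\prod_{t\notin I}(1-e_t)$, $X_{IJ}^s\mapsto E_Ix_sE_J$, map back via $e_s\mapsto\sum_{I\ni s}E_I$, $x_s\mapsto\sum_{I,J}X_{IJ}^s$, and reduce everything to comparing the braid commutator with the relations $(\alpha^{st})$, $(\beta^{st})$. Your bookkeeping with the idempotents (mutual inverseness on generators, the quiver relations in both directions, $X^s_{KL}X^s_{LM}=0$, $E_K\iota(T_s)E_K\in\{-v^{-1}E_K,\,vE_K\}$) is correct and routine, as you say.

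The genuine gap is that the step which \emph{is} the theorem is only asserted, not proved. Both well-definedness claims hinge on the exact statement that, modulo the quiver relations, the vanishing of all $v$-coefficients of $E_I\,\Delta_{m}(\iota(T_s),\iota(T_t))\,E_J$, as $(I,J)$ ranges over all membership patterns of $s,t$, is equivalent to the family $(\alpha^{st})$, $(\alpha^{ts})$, $(\beta^{st})$ — and in your write-up this appears only as ``one should find that\dots''. Moreover the shortcut you propose for it is not accurate as stated: it is not the case that one extreme coefficient yields $\sum_k a_{m-1,k}P^k_{IJ}(s,t)$ while all remaining coefficients pair off into differences $P^r_{IJ}(s,t)-P^r_{IJ}(t,s)$. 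In the $(\alpha)$-configurations (say $s\in I$, $t\notin I$) one has $P^r_{IJ}(t,s)=0$ automatically, the two halves of the commutator contribute words beginning with different numbers of diagonal steps, and the signs $-v^{-1}$ versus $v$ attached to the $s$- and $t$-gradings interleave; extracting the Chebyshev coefficients $a_{m-1,k}$ from this requires an induction on $m$ (or an explicit generating-function/matrix identity for the alternating products of the two triangular blocks), and one must also check the converse direction, namely that for all membership patterns \emph{not} listed in $(\alpha)$ and $(\beta)$ the coefficients vanish already from the quiver relations, and that the listed relations are $\IZ$-linear combinations of the coefficients $E_Iy^\gamma(s,t)E_J$ (so that they really do hold in $\Omega$, giving well-definedness of $F$) and conversely suffice to kill every coefficient in $\Lambda$ (well-definedness of $G$). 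Without carrying out this case-by-case computation the argument is a correct reduction of the theorem to its computational core, not a proof of it.
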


\begin{remark}
Note that $(\beta)$ includes the relation $X_{IJ}^s = X_{IJ}^t$ for all $s,t\in I\setminus J$. This justifies to write $X_{IJ}$ in situation where we don't care which $s\in I\setminus J$ we choose.

Furthermore $(\alpha)$ includes the relation $X_{IJ}^s=0$ if there exists a $t\in J\setminus I$ such that $\ord(st)=2$. Therefore $\Omega$ really is a quotient of the path algebra $\IZ\mathcal{Q}_W$.
\end{remark}


\begin{remark}
In terms of this second set of generators, the parabolic morphisms $j: \Omega(W_J,J)\to\Omega(W,S)$ do the following:
\[j(E_A) = \sum_{\substack{A'\subseteq S \\ A' \cap J = A}} E_{A'}\]
\[j(X_{AB}^s) = \sum_{\substack{A',B'\subseteq S \\ A' \cap J = A, B'\cap J=B}} X_{A'B'}^s\]
\end{remark}

\section{The decomposition conjecture}\label{section:decomposition_conjecture}

\begin{conjecture}[{$W$-graph decomposition conjecture (c.f. \cite{hahn2016wgraphs})}]
Let $k$ be a good ring for $W$. There exists a family $(F^\lambda)_{\lambda\in\Irr(W)}$ of elements of $k\Omega$ such that:
\begin{enumerate}[label=(Z{\arabic*})]
	\item The $F^\lambda$ constitute are a decomposition of the identity into orthogonal idempotents:
	\[\forall\lambda,\mu\in\Irr(W): F^\lambda F^\mu = \delta_{\lambda\mu} F^\lambda, \quad 1=\smash{\sum_{\lambda\in\Irr(W)}} F^\lambda\]
	\item This decomposition is compatible with the decomposition induced by the path-algebra structure:
	\[\forall\lambda\in\Irr(W) \forall I\subseteq S: E_I F^\lambda = F^\lambda E_I\]
	\item There is a partial order $\preceq$ on $\Irr(W)$ such that only \enquote{downward edges} exist: If $F^\lambda \Omega F^\mu \neq 0$, then $\lambda \preceq \mu$.
	\item There are surjective $k$-algebra morphisms $\psi_\lambda: k^{d_\lambda\times d_\lambda} \twoheadrightarrow F^\lambda kW F^\lambda$ for all $\lambda\in\Irr(W)$ where $d_\lambda$ denotes the degree of the character $\lambda$.
\end{enumerate}
\end{conjecture}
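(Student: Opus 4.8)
I would attack the statement by deforming the central primitive idempotents of the group algebra $kW$ until they become compatible with the path-algebra grading. The first observation is that condition (Z2) is equivalent to a membership statement: since the $E_I$ are orthogonal idempotents with $\sum_{I}E_I=1$, an element $a\in k\Omega$ satisfies $E_I a=aE_I$ for all $I$ if and only if $a=\sum_{I\subseteq S}E_I\,a\,E_I$, i.e. $a$ lies in the centraliser subalgebra $\Psi:=\bigoplus_{I\subseteq S}E_I\,\Omega\,E_I$. Thus the entire construction must take place inside $k\Psi$, and the task reduces to producing a complete system of orthogonal idempotents of $k\Psi$ indexed by $\Irr(W)$ that meets (Z3) and (Z4).

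For the candidates I would use the Hecke embedding. Specialising $\iota\colon H\to k[v^{\pm1}]\Omega$ at $v=1$ gives an algebra map $\iota_1\colon kW\to k\Omega$ (this image is what ``$kW\subseteq k\Omega$'' means in (Z4)), under which the central primitive idempotents $z^\lambda\in kW$ become orthogonal idempotents of $k\Omega$ summing to $1$. These satisfy (Z1) but generally violate (Z2), so I would pass to their diagonal parts $f^\lambda:=\sum_{I\subseteq S}E_I\,\iota_1(z^\lambda)\,E_I\in k\Psi$. By construction $\sum_\lambda f^\lambda=\sum_I E_I=1$ and each $f^\lambda\in k\Psi$, but the $f^\lambda$ are in general neither idempotent nor mutually orthogonal.

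The decisive step is then a triangular idempotent-lifting performed entirely inside $k\Psi$. I would look for a partial order $\preceq$ on $\Irr(W)$ such that $f^\mu f^\lambda=0$ unless $\lambda$ and $\mu$ are $\preceq$-comparable and such that $f^\lambda f^\lambda\equiv f^\lambda$ modulo strictly lower terms. Granting such a triangular structure, a standard successive-approximation (Gram--Schmidt-type) correction converts the $f^\lambda$ into genuine orthogonal idempotents $F^\lambda=f^\lambda+(\text{lower-order corrections})$ with $\sum_\lambda F^\lambda=1$; since every correction remains in $k\Psi$, condition (Z2) is automatically preserved, yielding (Z1) and (Z2) simultaneously, while the order driving the correction is exactly the one witnessing (Z3). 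For (Z4) I would compress the Wedderburn isomorphism $k^{d_\lambda\times d_\lambda}\isomorphic z^\lambda\,kW$ by $F^\lambda$, setting $\psi_\lambda(b):=F^\lambda\,\iota_1(b)\,F^\lambda$ after identifying $k^{d_\lambda\times d_\lambda}$ with $z^\lambda kW$; because $F^\lambda$ and $\iota_1(z^\lambda)$ agree to leading order in the filtration, $\psi_\lambda$ is a well-defined algebra morphism onto $F^\lambda kW F^\lambda$ and is surjective by construction (in the non-split or modular situation one uses the Artin decomposition and $k^{d_\lambda\times d_\lambda}$ surjects onto the relevant corner, which is precisely what (Z4) demands).

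I expect the main obstacle to be exactly the triangularity claim above: there is no a~priori reason why the projected idempotents $f^\lambda$ should multiply triangularly with respect to a single order, and proving it amounts to controlling all products $E_I\,\iota_1(z^\lambda)\,E_J\,\iota_1(z^\mu)\,E_K$, i.e. to understanding how the $W$-isotypic projections interleave with the compatibility-graph idempotents $E_I$. This interaction is genuinely combinatorial and type-dependent, which is why the conjecture has so far resisted a uniform argument and has only been verified by explicit computation in small types; any successful proof must extract enough from the relations $(\alpha^{st})$ and $(\beta^{st})$ to force a compatible order to exist.
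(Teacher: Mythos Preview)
The statement you are addressing is a \emph{conjecture}, and the paper does not provide a proof of it. The paper recalls that it has been verified case by case for types $A_1$--$A_4$, $B_3$, and $I_2(m)$ in earlier work; the paper's actual contribution is a reduction theorem: if the \emph{strong} form of the conjecture (with the additional conditions (Z5) and (Z6)) holds for $W_1$ and $W_2$, then it holds for $W_1\times W_2$. No direct construction of the $F^\lambda$ for a general finite Coxeter group is given or claimed, so there is no ``paper's own proof'' to compare against.

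Your proposal is therefore not a proof but a heuristic programme, and you correctly locate the obstruction yourself: the triangularity of the products $f^\mu f^\lambda$ with respect to some partial order is precisely the open content of the conjecture, and nothing in your outline forces it. A few technical points also deserve care. First, your $\Psi=\bigoplus_I E_I\,\Omega\,E_I$ (the commutant of the $E_I$) is not the paper's $\Psi$, which is the subalgebra generated by the $E_I$ together with only the \emph{transversal} edge elements $X_{IJ}$; a transversal $X_{IJ}$ satisfies $E_I X_{IJ}=X_{IJ}\neq X_{IJ}E_I$ for $I\neq J$, so the two objects are genuinely different, and the strong conditions (Z5), (Z6) concern the paper's $\Psi$, not yours. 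Second, the existence of central primitive idempotents $z^\lambda\in kW$ requires more than $k$ being a good ring; a good ring need not even be a field, let alone a splitting field for $W$ with $|W|$ invertible. Third, your candidate $\psi_\lambda(b)=F^\lambda\,\iota_1(b)\,F^\lambda$ is not obviously multiplicative: $F^\lambda\,\iota_1(b_1b_2)\,F^\lambda$ and $F^\lambda\,\iota_1(b_1)\,F^\lambda\,\iota_1(b_2)\,F^\lambda$ differ by terms involving the other $F^\mu$, and absorbing those again requires exactly the order-theoretic control that you have not established.
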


\begin{remark}
In \cite{hahn2016wgraphs} it was already proven that the Coxeter groups of types $A_1 - A_4$, $I_2(m)$ and $B_3$ all satisfy this decomposition conjecture. For the purpose of the reduction theorem for products, a sharper version of the conjecture will be needed:
\end{remark}

\begin{definition}
$\Psi=\Psi(W,S)$ is the subalgebra of $\Omega(W,S)$ generated by all $E_I$ and all \emph{transversal} edges $X_{IJ}$.
\end{definition}

\begin{conjecture}[Strong $W$-graph decomposition conjecture]
Using the notations of the decomposition conjecture, the strong decomposition conjecture for $(W,S)$ claims that the following additional statements are true
\begin{enumerate}[label=(Z{\arabic*}), start=5]
	\item $F^\lambda X_{IJ}F^\lambda\in\Psi(W,S)$ for \emph{all} edges $I\leftarrow J$.
	\item $F^\lambda\in\Psi(W,S)$.
\end{enumerate}
\end{conjecture}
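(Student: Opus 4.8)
The plan is to use the fact that, since every $s\in S_1$ commutes with every $t\in S_2$, the compatibility graph of $W=W_1\times W_2$ is transparently built from those of $W_1$ and $W_2$. Its vertices are the pairs $(A,B)$ with $A\subseteq S_1$, $B\subseteq S_2$, and a short case analysis of the edge condition shows that every \emph{transversal} edge of $\mathcal{Q}_W$ is either a ``horizontal'' one $(A,C)\leftrightarrows(A',C)$ induced by a transversal edge $A\leftrightarrows A'$ of $\mathcal{Q}_{W_1}$ or a ``vertical'' one induced by $\mathcal{Q}_{W_2}$, while the remaining edges are inclusion edges $(A_1,A_2)\leftarrow(B_1,B_2)$ with $B_i\subseteq A_i$, which I split into horizontal ($B_2=A_2$), vertical ($B_1=A_1$) and \emph{diagonal} ($B_1\subsetneq A_1$ and $B_2\subsetneq A_2$). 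Throughout I work with the two parabolic embeddings $j_1\colon\Omega(W_1)\hookrightarrow\Omega(W)$ and $j_2\colon\Omega(W_2)\hookrightarrow\Omega(W)$ (for $J=S_1$ resp. $J=S_2$), for which $E_{(A,B)}=j_1(E_A)j_2(E_B)$ and for which the formula for parabolic morphisms on the path-algebra generators gives, e.g., $j_1(X_{AA'})=\sum_{C\subseteq S_2}X_{(A,C)(A',C)}$ for a transversal $W_1$-edge and $j_1(X^s_{A_1B_1})=\sum_{D\subseteq C\subseteq S_2}X^s_{(A_1,C)(B_1,D)}$ for a $W_1$-inclusion edge. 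The central lemma I aim for is that $j_1(\Psi(W_1))$ and $j_2(\Psi(W_2))$ commute elementwise in $\Omega(W)$ and, together with the $E_{(A,B)}$, generate $\Psi(W)$, so that multiplication yields a surjection $\Psi(W_1)\otimes_k\Psi(W_2)\twoheadrightarrow\Psi(W)$.

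To prove the commutation I check it on the algebra generators of the two $\Psi$'s. The cases involving the idempotents $E_A,E_B$ are immediate, and the commutator of $e_t$ ($t\in S_2$) with a lifted transversal $W_1$-edge vanishes because that lift is ``constant in the $S_2$-direction''. The only substantial case is a lifted transversal $W_1$-edge against a lifted transversal $W_2$-edge: one first records that $x_sx_t=x_tx_s$ in $\Omega(W)$ for $s\in S_1$, $t\in S_2$ (expanding the braid relation for $m_{st}=2$, the relations $y^\gamma(s,t)=0$ force both $[x_s,x_t]=0$ and $[e_s,x_t]=[e_t,x_s]$), then multiplies this identity by $E_{(A,B)}$ on the left and $E_{(A',B')}$ on the right, inserts $1=\sum_K E_K$ in the middle, and invokes the edge-validity condition once more to see that in $\sum_K E_{(A,B)}x_sE_Kx_tE_{(A',B')}$ only $K=(A',B)$ can survive while in $\sum_K E_{(A,B)}x_tE_Kx_sE_{(A',B')}$ only $K=(A,B')$; the resulting equality of the two single surviving terms is exactly $j_1(X_{AA'})j_2(X_{BB'})=j_2(X_{BB'})j_1(X_{AA'})$. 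Surjectivity onto $\Psi(W)$ is then clear from the generators, since $X_{(A,C)(A',C)}=j_1(X_{AA'})j_2(E_C)$ and symmetrically for vertical edges.

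Given the lemma, I would define $F^{\lambda}:=j_1(F^{\lambda_1})\,j_2(F^{\lambda_2})$ for $\lambda=(\lambda_1,\lambda_2)\in\Irr(W)=\Irr(W_1)\times\Irr(W_2)$, starting from witnesses of the strong conjecture for $W_1$ and $W_2$. Because the two factors commute, (Z1) is inherited factorwise; (Z2) follows from $E_{(A,B)}=j_1(E_A)j_2(E_B)$, $E_AF^{\lambda_1}=F^{\lambda_1}E_A$, and the commutation lemma; and (Z6) is precisely where the strengthened hypothesis is used: $F^{\lambda_i}\in\Psi(W_i)$ gives $F^{\lambda}\in j_1(\Psi(W_1))\,j_2(\Psi(W_2))\subseteq\Psi(W)$. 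For (Z4) one tensors the surjections $\psi_{\lambda_i}\colon k^{d_{\lambda_i}\times d_{\lambda_i}}\twoheadrightarrow F^{\lambda_i}kW_iF^{\lambda_i}$ and identifies the group-algebra corner $F^{\lambda}kWF^{\lambda}$ with $F^{\lambda_1}kW_1F^{\lambda_1}\otimes_k F^{\lambda_2}kW_2F^{\lambda_2}$, using $kW=kW_1\otimes_k kW_2$ and that $\iota(T_s)|_{v=1}$ ($s\in S_1$) commutes with $\iota(T_t)|_{v=1}$ ($t\in S_2$) in $\Omega(W)$ (again a consequence of $[e_s,x_t]=[e_t,x_s]$ and $[x_s,x_t]=0$); the left-hand side is then a quotient of $k^{d_{\lambda_1}\times d_{\lambda_1}}\otimes_k k^{d_{\lambda_2}\times d_{\lambda_2}}=k^{d_{\lambda}\times d_{\lambda}}$.

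It remains to establish (Z3) and (Z5), which is the hard part. I order $\Irr(W)$ by the product order, $\lambda\preceq\mu\iff\lambda_1\preceq_1\mu_1$ and $\lambda_2\preceq_2\mu_2$. Using (Z1), (Z2) and $1=\sum_\nu F^\nu$, any element of $F^\lambda\Omega(W)F^\mu$ can be written as a sum of products $F^{\nu_0}X_{I_0I_1}F^{\nu_1}\cdots F^{\nu_{k-1}}X_{I_{k-1}I_k}F^{\nu_k}$ (with $\nu_0=\lambda$, $\nu_k=\mu$), which reduces (Z3) to $F^{\nu}X_{IJ}F^{\rho}\neq0\Rightarrow\nu\preceq\rho$ for a single edge and (Z5) to $F^{\nu}X_{IJ}F^{\nu}\in\Psi(W)$ for a single edge. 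For a horizontal transversal edge, $X_{(A,C)(A',C)}=j_1(X_{AA'})j_2(E_C)$ with the two factors commuting, so $F^{\lambda}X_{(A,C)(A',C)}F^{\mu}=j_1(F^{\lambda_1}X_{AA'}F^{\mu_1})\,j_2(F^{\lambda_2}E_CF^{\mu_2})$ and both statements follow from the factorwise ones (note $F^{\lambda_2}E_CF^{\mu_2}=\delta_{\lambda_2\mu_2}F^{\lambda_2}E_C$ by (Z2)); vertical transversal and horizontal/vertical inclusion edges are handled identically, living over a single factor. The genuinely new case is the \emph{diagonal} inclusion edge: from $X^s_{(A_1,A_2)(B_1,B_2)}=j_2(E_{A_2})\,j_1(X^s_{A_1B_1})\,j_2(E_{B_2})$ one obtains
\[F^{\lambda}X^s_{(A_1,A_2)(B_1,B_2)}F^{\mu}=j_2\bigl(F^{\lambda_2}E_{A_2}\bigr)\,j_1\bigl(F^{\lambda_1}X^s_{A_1B_1}F^{\mu_1}\bigr)\,j_2\bigl(E_{B_2}F^{\mu_2}\bigr),\]
and since $A_2\neq B_2$ forces $E_{A_2}E_{B_2}=0$, this vanishes \emph{as soon as} the middle factor $F^{\lambda_1}X^s_{A_1B_1}F^{\mu_1}$ lies in $\Psi(W_1)$, because then it commutes past $j_2(\Psi(W_2))$ and lets the two $W_2$-idempotents collide; so diagonal edges contribute nothing and (Z3), (Z5) follow for all edges. \textbf{This last point is the step I expect to be the main obstacle}: condition (Z5) only supplies $\Psi$-membership for the \emph{diagonal} corner $F^{\nu}X_{IJ}F^{\nu}$, whereas here one needs it for an \emph{off-diagonal} corner $F^{\lambda_1}X^s_{A_1B_1}F^{\mu_1}$ with $\lambda_1\prec_1\mu_1$ on an \emph{inclusion} edge, which can genuinely fail for a single such corner; one therefore has to control the interaction of the $F^{\nu}$ with inclusion edges more finely than (Z5) states---either by sharpening the conclusion of the strong conjecture for the factors to ``$F^{\lambda}X_{IJ}F^{\mu}\in\Psi$ whenever $\lambda\preceq\mu$'' (and checking that the explicit proofs in \cite{hahn2016wgraphs} in fact deliver this as well) or by a direct argument that the level-lowering part of $j_1(F^{\lambda_1}X^s_{A_1B_1}F^{\mu_1})$ from $W_2$-level $A_2$ down to $B_2\subsetneq A_2$ already forces $\lambda_2\preceq_2\mu_2$. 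It is exactly this phenomenon---that $\Omega(W_1\times W_2)$ is strictly larger than any naive tensor construction from $\Omega(W_1)$ and $\Omega(W_2)$ because of the diagonal edges, while $\Psi$ \emph{does} tensor---that makes the passage to $\Psi$, and hence the strengthened conjecture, indispensable.
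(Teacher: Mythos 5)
Your setup (the two commuting embedded copies of $\Psi_1,\Psi_2$, $F^{\lambda}=j_1(F^{\lambda_1})j_2(F^{\lambda_2})$, the product partial order, tensoring the surjections for (Z4)) is the same strategy as the paper, and your commutation lemma is essentially the paper's Theorem \ref{tensor_morphism:decomposition_psi}, proved by the same device (the $m_{st}=2$ braid relation gives $[x_s,x_t]=0$ and $[e_s,x_t]=[e_t,x_s]$; sandwich with $E$'s, insert $1=\sum_K E_K$, and rule out inclusion-type middle vertices). But your treatment of (Z3) has a genuine gap, and it is not where you locate it. You claim that \enquote{horizontal/vertical inclusion edges are handled identically, living over a single factor}: for an edge $(A_1,C)\leftarrow(B_1,C)$ with $B_1\subsetneq A_1$ the factorization $F^{\lambda}X F^{\mu}=j_1(F^{\lambda_1}X_{A_1B_1}F^{\mu_1})\,j_2(F^{\lambda_2}E_CF^{\mu_2})$ requires commuting $j_2(F^{\mu_2})$ (or $j_2(F^{\lambda_2})$) past $j_1(X_{A_1B_1})$, and $j_1$ of an \emph{inclusion} edge does not lie in $j_1(\Psi_1)$ -- indeed $[e_t,x_s]\neq 0$ in general is exactly what makes $\ker(\tau)$ nontrivial. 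What you can legitimately extract is only $\iota_2(F^{\lambda_2}E_C)\,\iota_1(F^{\lambda_1}X_{A_1B_1}F^{\mu_1})\,\iota_2(E_CF^{\mu_2})\neq 0$, hence $\lambda_1\preceq_1\mu_1$, with no control over the second component. This mixed inclusion case is precisely the hard part of the paper's proof: assuming $\lambda_2\not\preceq_2\mu_2$, one proves the auxiliary statement $F^{\lambda}[\iota_1(X_{A_1B_1}),\iota_2(\Psi_2)]F^{\mu}=0$ by a careful analysis of the $(\beta^{st})$-relations (with $s\in A_1\setminus B_1$, $t$ in a transversal $W_2$-edge) together with the already-settled transversal and diagonal cases, then uses (Z6) to slide $\iota_2(F^{\mu_2})\in\iota_2(\Psi_2)$ across $\iota_1(X_{A_1B_1})$ and collide $F^{\lambda_2}F^{\mu_2}$, a contradiction. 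Nothing in your proposal supplies this step.

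Conversely, the obstacle you flag in bold -- the diagonal inclusion edges -- is not actually an obstacle, and no strengthening of the hypotheses beyond (Z5)/(Z6) is needed. For $B_1\subsetneq A_1$ and $B_2\subsetneq A_2$ pick $s\in A_1\setminus B_1$ and $t\in A_2\setminus B_2$; since $X^s_{IJ}=X^t_{IJ}$, the same corner can be written both as $E_I\,\iota_2(F^{\lambda_2})\,\iota_1(F^{\lambda_1}X^s_{A_1B_1}F^{\mu_1})\,\iota_2(F^{\mu_2})\,E_J$ and as $E_I\,\iota_1(F^{\lambda_1})\,\iota_2(F^{\lambda_2}X^t_{A_2B_2}F^{\mu_2})\,\iota_1(F^{\mu_1})\,E_J$ (here only commutation with elements of the $\Psi_i$ is used, since each $\iota_2$-block sits entirely to one side of the $\iota_1$-block and vice versa). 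Nonvanishing then gives $\lambda_1\preceq_1\mu_1$ \emph{and} $\lambda_2\preceq_2\mu_2$ directly; there is no need for the corner to vanish, and hence no need for $\Psi$-membership of off-diagonal corners or for your proposed sharpened hypothesis. (For (Z4) and for showing $F^{\lambda}\ker(\tau)F^{\lambda}=0$ one does use vanishing on diagonal edges, but only for the diagonal corner $\lambda=\mu$, where (Z5) for $W_1$ suffices.) Note also that the same unjustified commutation reappears in your (Z4) step when you factor $F^{\lambda}kWF^{\lambda}$: elements of the embedded $kW_1$ are not in $\Psi_1$, so that identification needs the kernel analysis ($F^{\lambda}\ker(\tau)F^{\lambda}=0$, using (Z3) for $W$ and (Z5)) rather than elementwise commutation.
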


\begin{remark}
Inspection of the proofs of the decomposition conjecture for $A_1-A_4$, $B_3$ and $I_2(m)$ from \cite{hahn2016wgraphs} shows that the strong decomposition conjecture is also true in these cases.
\end{remark}

\begin{remark}
Note that parabolic morphisms map $\Psi(W_J,J)$ into $\Psi(W,S)$.
\end{remark}
\section{A reduction theorem for products}\label{section:maintheorem}

\begin{convention}
In this section fix a reducible Coxeter group $W=W_1\times W_2$. To indicate whether something refers to $W_1$, $W_2$ or $W$ indices will be used: $1$, $2$ and no index respectively. Therefore $\Omega$ is short for $\Omega(W,S)$, $H_2$ is short for $H(W_2,S_2)$, the set of simple reflections $S$ decomposes as $S=S_1\sqcup S_2$ etc.

\medbreak
The parabolic inclusions $W_1\to W$, $W_2\to W$ and all induced parabolic inclusions will be denoted by $\iota_1$ and $\iota_2$ respectively.

\medbreak
For subsets $I\subseteq S$ the notation $I=X\sqcup Y$ will be used the refer to the canonical partition with $X\subseteq W_1$, $Y\subseteq W_2$. Often we will also write $I=I_1\sqcup I_2$ in accordance with the index-convention.
\end{convention}

\begin{remark}
The first observation we make and will use through-out the following proofs without further mention: All non-zero $X_{IJ}\in\Omega$ with $I=I_1\sqcup I_2$, $J=J_1\sqcup J_2$ have one of the following forms
\begin{enumerate}
	\item Inclusion edge: $I_1\supseteq J_1$, $I_2\supseteq J_2$ at least one of which is a proper inclusion.
	\item Transversal edges: One of these two cases
	\begin{enumerate}
	\item $I_1 \leftrightarrows J_1$ in $\mathcal{Q}_{W_1}$ and $I_2=J_2$.
	\item $I_1=J_1$ and $I_2 \leftrightarrows J_2$ in $\mathcal{Q}_{W_2}$.
	\end{enumerate}
\end{enumerate}
This follows immediately from the condition of compatibility: If $I\leftrightarrows J$ in $\mathcal{Q}_W$, then all $s\in I\setminus J$ are connected to all $t\in J\setminus I$ in the Dynkin-diagram of $(W,S)$ so that the symmetric set difference $I\Delta J$ is completely contained either in $S_1$ or in $S_2$.
\end{remark}

\subsection{Tensor products}

\begin{remark}
For Hecke algebras the reduction to the irreducible case is easy because $H=H_1\otimes_{k[v^{\pm1}]} H_2$ canonically. This, as it turns out, is not the case for $W$-graph algebras.
\end{remark}

\begin{lemma}\label{tensor_morphism}
Let $(W,S)$ be a reducible Coxeter group as above.
	
There is a unique morphism $\tau: \Omega\to \Omega_1\otimes_\IZ \Omega_2$ with 
\[e_s \mapsto \begin{cases} e_s \otimes 1 & \text{if }s\in S_1 \\ 1\otimes e_s & \text{if }s\in S_2\end{cases} \quad\text{and}\quad
x_s \mapsto \begin{cases} x_s \otimes 1 & \text{if }s\in S_1 \\ 1\otimes x_s & \text{if }s\in S_2\end{cases}\]
This morphism satisfies
\begin{enumerate}
	\item $\tau(E_I) = E_{I_1} \otimes E_{I_2}$.
	\item $\displaystyle\tau(X_{IJ}^{s}) = \begin{cases} X_{I_1 J_1}^{s} \otimes E_{I_2}E_{J_2} & \text{if }s\in S_1 \\ E_{I_1}E_{J_1} \otimes X_{I_2 J_2}^{s} & \text{if }s\in S_2\end{cases}$.
	\item $\displaystyle\tau(\iota(T_s)) = \begin{cases} \iota_1(T_s)\otimes 1 & \text{if }s\in S_1 \\ 1\otimes\iota_2(T_s) & \text{if }s\in S_2\end{cases}$,
	
	i.e. $\tau$ restricts to the canonical isomorphism $H \to H_1 \otimes H_2$.
	\item $\tau$ maps $\Psi$ onto $\Psi_1\otimes\Psi_2$.
\end{enumerate}
\end{lemma}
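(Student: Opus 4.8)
The plan is to obtain $\tau$ from the universal mapping property of $\Omega$ (Lemma~\ref{omega:universal_mapping_property}) applied to the ring $A:=\Omega_1\otimes_\IZ\Omega_2$, with $\tilde e_s,\tilde x_s$ the prescribed images, and then to read off parts (a)--(d) essentially by inspection. For the universal property one must check that the $\tilde e_s,\tilde x_s$ satisfy the defining relations of the $e_s,x_s$ and that the $v$-coefficients of the braid-commutators are killed. The first is clear: for $s,t$ in the same factor $S_i$ the relations hold inside $\Omega_i$, and for $s\in S_1,t\in S_2$ the elements involved lie in the two commuting ``slots'' $\Omega_1\otimes1$ and $1\otimes\Omega_2$, so for instance $\tilde e_s\tilde e_t=e_s\otimes e_t=\tilde e_t\tilde e_s$. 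For the braid-commutators I would first record the computation needed for part~(c) anyway, namely $-v^{-1}\tilde e_s+v(1-\tilde e_s)+\tilde x_s=\iota_1(T_s)\otimes1$ for $s\in S_1$ and $=1\otimes\iota_2(T_s)$ for $s\in S_2$; then for $s,t\in S_i$ the braid-commutator $\Delta_{m_{st}}$ maps into $\Omega_i\otimes1$ (resp.\ $1\otimes\Omega_i$) and vanishes there because it does in $\Omega_i$, whereas for $s\in S_1,t\in S_2$ we have $m_{st}=2$ and $\Delta_2=\iota_1(T_s)\otimes\iota_2(T_t)-\iota_1(T_s)\otimes\iota_2(T_t)=0$. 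Uniqueness is immediate, since $\Omega$ is generated by the $e_s,x_s$.

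Parts (a)--(c) then follow by short computations. For part~(a), write $E_I=\prod_{t\in I}e_t\prod_{t\in S\setminus I}(1-e_t)$, apply $\tau$, and regroup using that images of $S_1$-factors (of the form $a\otimes1$) commute with images of $S_2$-factors (of the form $1\otimes b$): the $S_1$-part assembles to $E_{I_1}\otimes1$ and the $S_2$-part to $1\otimes E_{I_2}$. Part~(b) is then $\tau(E_Ix_sE_J)=(E_{I_1}\otimes E_{I_2})(x_s\otimes1)(E_{J_1}\otimes E_{J_2})=X_{I_1J_1}^s\otimes E_{I_2}E_{J_2}$ for $s\in S_1$, and symmetrically for $s\in S_2$. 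Part~(c) is the one-line expansion of $\iota(T_s)$ already used above; together with the fact that the canonical isomorphism $H\to H_1\otimes H_2$ sends $T_s$ to $T_s\otimes1$ (resp.\ $1\otimes T_s$) and that $\iota,\iota_1,\iota_2$ are embeddings, this says precisely that $\tau$ restricts to that isomorphism on $H$.

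The only step calling for a little care is part~(d). The inclusion $\tau(\Psi)\subseteq\Psi_1\otimes\Psi_2$ is easy: $\tau(E_I)=E_{I_1}\otimes E_{I_2}$ obviously lies there, and by the remark opening Section~\ref{section:maintheorem} a transversal edge $X_{IJ}$ of $\mathcal Q_W$ has $I\Delta J$ contained in one $S_i$, say $S_1$, so that $I_2=J_2$, $I_1\leftrightarrows J_1$ in $\mathcal Q_{W_1}$, and $\tau(X_{IJ})=X_{I_1J_1}\otimes E_{I_2}\in\Psi_1\otimes\Psi_2$. The reverse inclusion is the main point: $\tau(\Psi)$ is a subring, and $\Psi_1\otimes\Psi_2$ is generated as a ring by the elements $g\otimes1$ with $g$ a generator of $\Psi_1$ together with $1\otimes g$ with $g$ a generator of $\Psi_2$, so it suffices to produce these in $\tau(\Psi)$. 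Using $1=\sum_{C\subseteq S_2}E_C$ in $\Omega_2$ one gets $E_A\otimes1=\sum_C\tau(E_{A\sqcup C})$ for $A\subseteq S_1$, and $X_{AB}\otimes1=\sum_C\tau(X^s_{A\sqcup C,\,B\sqcup C})$ for a transversal edge $A\leftrightarrows B$ of $\mathcal Q_{W_1}$ and $s\in A\setminus B$; here one checks that $A\sqcup C\leftrightarrows B\sqcup C$ in $\mathcal Q_W$ — its symmetric difference is still $A\Delta B\subseteq S_1$ and the non-commuting condition is unchanged — so every edge occurring is transversal and the relevant $E$'s and $X$'s lie in $\Psi$. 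The symmetric statements with $W_1$ and $W_2$ interchanged then give $\tau(\Psi)=\Psi_1\otimes\Psi_2$, and the only genuine obstacle — keeping track of which edges remain transversal when passing between $\mathcal Q_{W_i}$ and $\mathcal Q_W$ — is exactly this last verification.
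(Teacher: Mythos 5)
Your proposal is correct and takes the same route as the paper, which simply invokes the universal mapping property of $\Omega$ (Lemma~\ref{omega:universal_mapping_property}) for well-definedness and treats (a)--(d) as routine consequences of the definition. Your write-up just makes explicit the verifications the paper leaves implicit, including the surjectivity in (d) via $E_A\otimes 1=\sum_C\tau(E_{A\sqcup C})$ and $X_{AB}\otimes 1=\sum_C\tau(X_{A\sqcup C,B\sqcup C})$, which is exactly the parabolic-morphism observation the paper records separately (that $\iota_i$ maps $\Psi_i$ into $\Psi$) and uses again in Theorem~\ref{tensor_morphism:decomposition_psi}.
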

\begin{proof}
The well-definedness of $\tau$ is an easy consequence of its definition and lemma \ref{omega:universal_mapping_property}.
\end{proof}

\begin{remark}
Contrary to the Hecke algebra case this morphism need not be injective as the following theorem shows.
\end{remark}

\begin{theorem}\label{tensor_morphism:kernel}
$\ker(\tau)$ is generated as a two-sided ideal by $\Set{X_{IJ}\in\Omega \mid I_1\supsetneq J_1 \wedge I_2\supsetneq J_2}$.
\end{theorem}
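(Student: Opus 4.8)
The plan is to show the two inclusions separately. Let $N$ denote the two-sided ideal generated by $\Set{X_{IJ}\in\Omega \mid I_1\supsetneq J_1 \wedge I_2\supsetneq J_2}$. For the inclusion $N\subseteq\ker(\tau)$ one only needs to check that $\tau$ kills each generator $X_{IJ}$ with $I_1\supsetneq J_1$ and $I_2\supsetneq J_2$; but such an $X_{IJ}$ is already zero in $\Omega$ unless there is an $s\in I\setminus J$, and then $s$ lies in (say) $S_1$, so by part b.\ of Lemma \ref{tensor_morphism} we get $\tau(X_{IJ}^s)=X_{I_1J_1}^s\otimes E_{I_2}E_{J_2}$. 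Since $I_2\supsetneq J_2$ we have $E_{I_2}E_{J_2}=\delta_{I_2 J_2}E_{I_2}=0$, so $\tau(X_{IJ})=0$ as required (and symmetrically for $s\in S_2$, where the factor $E_{I_1}E_{J_1}$ vanishes because $I_1\supsetneq J_1$). Hence $N\subseteq\ker(\tau)$ and $\tau$ factors through $\bar\tau:\Omega/N\to\Omega_1\otimes_{\IZ}\Omega_2$.

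The substance of the proof is the reverse inclusion $\ker(\tau)\subseteq N$, i.e.\ that $\bar\tau$ is injective. I would prove this by constructing an explicit inverse. The natural candidate is the $\IZ$-algebra map $\sigma:\Omega_1\otimes_\IZ\Omega_2\to\Omega/N$ determined on tensors $a\otimes b$ by sending $a\in\Omega_1$ via the parabolic morphism $\iota_1$ composed with the quotient $\Omega\twoheadrightarrow\Omega/N$, and likewise $b\in\Omega_2$ via $\iota_2$; this is well-defined as an algebra map on the tensor product provided the images of $\iota_1(\Omega_1)$ and $\iota_2(\Omega_2)$ commute \emph{modulo $N$}. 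That commutation is exactly the point where $N$ is needed: for $s\in S_1$, $t\in S_2$ one has $m_{st}=2$, and the generators $e_s,x_s,e_t,x_t$ do \emph{not} literally commute in $\Omega$ — the relation $(\alpha^{st})$ with $m=2$ only forces $x_sx_t$ and $x_tx_s$ to agree up to terms of the shape $X_{IJ}$ with both coordinates strictly decreasing (this is precisely the content of the opening Remark of Section \ref{section:maintheorem} together with the relations; I would make this commutator computation explicit using $\tau_1=T$, so $a_{1,0}=0$ and the relation $(\alpha^{st})$ reads $0=P_{IJ}^1(s,t)$ in the relevant index ranges). Granting the commutation modulo $N$, $\sigma$ is a well-defined ring homomorphism, and one checks on generators that $\sigma\circ\bar\tau=\id$ and $\bar\tau\circ\sigma=\id$: in one direction $\bar\tau(\sigma(e_s\otimes 1))=\bar\tau$ of the image of $e_s$, which is $e_s\otimes 1$ by definition of $\tau$; in the other direction, starting from $e_s\in\Omega$ with $s\in S_1$, $\sigma(\tau(e_s))=\sigma(e_s\otimes 1)=\iota_1(e_s)\bmod N$, and one must observe $\iota_1(e_s)\equiv e_s\bmod N$. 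This last congruence follows from the description of the parabolic morphism on the $E_I$: $\iota_1(e_s)=\sum_{A\subseteq S_1,\ s\in A}\iota_1(E_A)=\sum_{I\subseteq S,\ s\in I_1}E_I=e_s$ in $\Omega$ already, no $N$ needed. Similarly one checks $\sigma(\tau(x_s))\equiv x_s\bmod N$ using $\iota_1(x_s)\equiv x_s$; here the parabolic description $\iota_1(X_{AB}^s)=\sum_{A'\cap S_1=A,\ B'\cap S_1=B}X_{A'B'}^s$ together with the quiver relations shows the sum collapses modulo $N$ (terms with $A'\setminus B'\not\ni s$ vanish by relation I.c, and after that only the "single-coordinate-moving" edges survive, with the doubly-decreasing ones killed in $\Omega/N$). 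Since $\bar\tau$ has a two-sided inverse it is an isomorphism, so $\ker(\tau)=N$.

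The main obstacle I anticipate is the commutation-modulo-$N$ step: one must show that for $s\in S_1$, $t\in S_2$ every commutator $[\iota_1(\xi),\iota_2(\eta)]$ (for $\xi$ a word in $e_s,x_s$ and $\eta$ a word in $e_t,x_t$) lies in $N$, and it suffices to do this for the generators $\xi,\eta\in\{e,x\}$. The cases involving an $e$ are immediate since $e_s$ ($s\in S_1$) commutes with $e_t$ and $x_t$ ($t\in S_2$) outright — indeed $e_s$ is central among the $S_2$-generators because the relevant $\Delta_2$ relation forces $e_se_t=e_te_s$ and one checks $e_sx_t=x_te_s$ directly from $(\alpha)/(\beta)$ at $m=2$. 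The one genuine case is $[x_s,x_t]$, and the claim is $x_sx_t-x_tx_s\in N$. Expanding $x_sx_t-x_tx_s=\sum_{I,J}(P_{IJ}^2(s,t)-P_{IJ}^2(t,s))$ and $=\sum_{I,J} E_I(x_sx_t-x_tx_s)E_J$, each summand $E_I x_s x_t E_J - E_I x_t x_s E_J$ is a sum of paths $X_{I I'}^s X_{I' J}^t - X_{I I'}^t X_{I' J}^s$; tracking vertex labels through relation I.c one sees that for a nonzero contribution the first type forces $I_1\supseteq I'_1$, $I'_2\supseteq J_2$, $s\in I_1\setminus I'_1$, $t\in I'_2\setminus J_2$ with $I'_1\cap S_2=I_1\cap S_2$ etc., and the only surviving terms are those where both $s$ and $t$ genuinely move, i.e.\ $I_1\supsetneq I'_1=J_1$ and $I_2=I'_2\supsetneq J_2$, which are by definition products involving an element of the generating set of $N$ — and the relation $(\alpha^{st})$ at $m=2$ is what guarantees no \emph{other} terms can appear. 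Once this bookkeeping is in place the rest is formal.
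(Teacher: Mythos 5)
Your overall strategy (show $N\subseteq\ker(\tau)$ on generators, then invert $\bar\tau$ by mapping $\Omega_1\otimes_\IZ\Omega_2\to\Omega/N$ via the parabolic morphisms and the universal property of the tensor product) is viable and, once unwound, amounts to the same presentation argument the paper uses: $\Omega_1\otimes\Omega_2$ is exactly $\Omega$ modulo the cross-commutators $[e_s,x_t],[x_s,e_t],[x_s,x_t]$ for $s\in S_1,t\in S_2$, so everything hinges on locating these commutators relative to $N$. But this is precisely where your argument goes wrong: you dismiss the cases involving an $e$ as ``immediate'', claiming $e_sx_t=x_te_s$ holds outright in $\Omega$, and declare $[x_s,x_t]$ to be the one genuine case. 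The roles are exactly reversed. In $\Omega$ one has $[x_s,x_t]=0$ and $[e_s,x_t]=[e_t,x_s]$ identically (the paper gets this by expanding $[\iota(T_s),\iota(T_t)]=(v^{-1}+v)\bigl(-[e_s,x_t]+[e_t,x_s]\bigr)+[x_s,x_t]$ and using that each $v$-coefficient vanishes by the defining relations), whereas $[e_s,x_t]$ does \emph{not} vanish in general: $E_I[e_s,x_t]E_J=X_{IJ}^t$ whenever $s\in I_1\setminus J_1$ and $t\in I_2\setminus J_2$, and these doubly-decreasing inclusion edges are exactly the generators of $N$ and of $\ker(\tau)$; they are nonzero already for $W=A_1\times A_1$. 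If your claim $[e_s,x_t]=0$ were correct, all cross-commutators would vanish and $\tau$ would be injective, contradicting the very theorem you are proving. So the well-definedness of your $\sigma$ needs the statement $[e_s,x_t]\in N$ (not $=0$), and proving that — by multiplying with $E_I,E_J$ and checking via the $(\alpha)$- and $(\beta)$-relations that only doubly-decreasing edges survive — is the actual content of the theorem.

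A secondary problem is your bookkeeping for $[x_s,x_t]$: the terms you say survive, namely $X_{I I'}^sX_{I'J}^t$ with $I_1\supsetneq I_1'=J_1$ and $I_2=I_2'\supsetneq J_2$, are products of two inclusion edges each of which moves only \emph{one} coordinate, so neither factor lies in the generating set of $N$; the claim that these products ``involve an element of the generating set of $N$'' is unjustified. (It is also unnecessary, since $[x_s,x_t]=0$ in $\Omega$ as noted above.) With these two points corrected — treat $[e_s,x_t]$ as the nontrivial commutator and show it lies in $N$ by the edge analysis, and obtain $[x_s,x_t]=0$ from the $m=2$ braid relation — your construction of the two-sided inverse goes through and is essentially equivalent to the paper's proof.
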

\begin{proof}
The tensor product $\Omega_1\otimes\Omega_2$ is obtained from $\Omega$ by forcing the generators $e_s, x_s$ with $s\in S_1$ to commute with the generators $e_t, x_t$ with $t\in S_2$ and $\tau$ is the quotient map $\Omega\to\Omega_1\otimes\Omega_2$ obtained by adding these relations.

Since $[e_s,e_t]=0$ already holds in $\Omega$, this really only means adding the relations $[e_s,x_t]=[x_s,e_t]=[x_s,x_t]=0$. In fact $[x_s,x_t]=0$ holds in $\Omega$ too and $[e_s,x_t]=0$ implies $[x_s,e_t]=0$. This can be seen as follows:
\begin{align*}
	T_s T_t &= (-v^{-1} e_s + v(1-e_s) + x_s)(-v^{-1} e_t+v(1-e_t)+x_t) \\
	&= +v^{-2} e_s e_t \\
	&\phantom{\text{= }} - v^{-1} (e_s x_t - x_s e_t) \\
	&\phantom{\text{= }} - e_s(1-e_t) - (1-e_s) e_t + x_s x_t \\
	&\phantom{\text{= }} + v ((1-e_s) x_t + x_s(1-e_t)) \\
	&\phantom{\text{= }} + v^2 e_s e_t \\
\implies \left[T_s,T_t\right] &= (v^{-1}+v)(-[e_s, x_t] +[e_t, x_s]) + [x_s, x_t]
\end{align*}
Therefore $[e_s,x_t]=[e_t,x_s]$ and $[x_s,x_t]=0$ hold in $\Omega$ by construction and adding the relations $[e_s,x_t]=0$ for all $s\in S_1, t\in S_2$ is sufficient to obtain $\Omega_1\otimes\Omega_2$ from $\Omega$.

In other words: $\ker(\tau)=\langle [e_s,x_t] \mid s\in S_1, t\in S_2\rangle$. Multiplying with $E_I$ and $E_J$ we get another generating set for this ideal consisting of
\begin{align*}
	E_I [e_s, x_t] E_J &= E_I (e_s x_t - x_t e_s) E_J \\
	&= e_s X_{IJ}^t - X_{IJ}^t e_s \\
	&=\begin{cases}
	X_{IJ}^t-0 & s\in I, s\notin J \\
	0-X_{IJ}^t & s\notin I, s\in J \\
	X_{IJ}^t-X_{IJ}^t & s\in I, s\in J \\
	0 - 0 & s\notin I, s\notin J
	\end{cases}
\end{align*}
Ob course these elements could already be zero in $\Omega$ and we only need to include those that have a chance of being non-zero in the generating set. So assume $X_{IJ}^t\neq 0$.

In the first case $I\leftarrow J$ is an inclusion edge with both $I_1\supsetneq J_1$ (because $s\in I_1\setminus J_1$) and $I_2\supsetneq J_2$ (because $t\in I_2\setminus J_2$) being proper inclusions. The second case cannot occur because then $s\in J\setminus I$, $t\in I\setminus J$ and $\ord(st)=2$ so that $X_{IJ}=0$ by the $(\alpha)$-relation. That proves the claim.
\end{proof}

\begin{theorem}\label{tensor_morphism:decomposition_psi}
With the notation above, $[\iota_1(\Psi_1),\iota_2(\Psi_2)]=0$ holds in $\Omega$ and therefore $\Psi \isomorphic \Psi_1\otimes\Psi_2$.
\end{theorem}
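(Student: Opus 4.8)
The plan is to first prove the commutation $[\iota_1(\Psi_1),\iota_2(\Psi_2)]=0$ in $\Omega$ and then to repackage it as the asserted isomorphism. Since $\Psi_i$ is generated by the vertex idempotents $E_A$ ($A\subseteq S_i$) and the transversal edges $X_{AB}$ ($A\leftrightarrows B$ in $\mathcal{Q}_{W_i}$), it suffices to check that the $\iota_1$-image of each such generator commutes with the $\iota_2$-image of each such generator. The remark describing the parabolic morphisms on the path-algebra generators, combined with the classification of non-zero edges of a product recalled at the start of this section, gives the explicit formulas
\[\iota_1(E_A)=\sum_{Y\subseteq S_2}E_{A\sqcup Y},\qquad\iota_1(X_{AB}^s)=\sum_{Y\subseteq S_2}X_{(A\sqcup Y)(B\sqcup Y)}^s\]
and, symmetrically, $\iota_2(E_C)=\sum_{X\subseteq S_1}E_{X\sqcup C}$ and $\iota_2(X_{CD}^t)=\sum_{X\subseteq S_1}X_{(X\sqcup C)(X\sqcup D)}^t$; in the transversal cases the summands indexed by two different subsets of $S_2$ (resp. $S_1$) drop out, the corresponding edge being neither an inclusion edge nor transversal in a single factor.

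With these formulas the idempotent--idempotent case is immediate, as $\iota_1(E_A)$ and $\iota_2(E_C)$ are sums of pairwise orthogonal vertex idempotents and $\iota_1(E_A)\iota_2(E_C)=E_{A\sqcup C}$. For a mixed pair one multiplies out the two sums and uses $E_IE_J=\delta_{IJ}E_I$ to collapse them: both $\iota_1(X_{AB}^s)\iota_2(E_C)$ and $\iota_2(E_C)\iota_1(X_{AB}^s)$ reduce to the single edge $X_{(A\sqcup C)(B\sqcup C)}^s$, and analogously for $\iota_1(E_A)$ against a transversal edge of $\Psi_2$. The only genuine case is a transversal edge $X_{AB}$ of $\mathcal{Q}_{W_1}$ against a transversal edge $X_{CD}$ of $\mathcal{Q}_{W_2}$; here the same collapsing yields
\[\iota_1(X_{AB}^s)\iota_2(X_{CD}^t)=X_{(A\sqcup C)(B\sqcup C)}^sX_{(B\sqcup C)(B\sqcup D)}^t,\qquad\iota_2(X_{CD}^t)\iota_1(X_{AB}^s)=X_{(A\sqcup C)(A\sqcup D)}^tX_{(A\sqcup D)(B\sqcup D)}^s,\]
so commutation reduces to a ``commuting square'' of transversal edges of $\Omega$ obtained by equating the two right-hand sides.

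That square is an instance of the relation $(\beta)$. Since $s\in S_1$ and $t\in S_2$ commute we have $\ord(st)=2$, so $(\beta^{st})$ includes the identity $\sum_KX_{IK}^sX_{KJ}^t=\sum_KX_{IK}^tX_{KJ}^s$ for all $I,J$ with $s,t\in I\setminus J$; I apply it with $I=A\sqcup C$ and $J=B\sqcup D$, for which $s,t\in I\setminus J$ holds. I expect the main obstacle to be checking that in this identity only the summand $K=B\sqcup C$ survives on the left and only $K=A\sqcup D$ survives on the right, which then produces exactly the two products above. This is a short structural argument: a non-zero left-hand summand forces, writing $K=K_1\sqcup K_2$, that $s\in A\setminus K_1$ and $t\in K_2\setminus D$; applying the classification of non-zero edges to $X_{(A\sqcup C)K}^s$ and to $X_{K(B\sqcup D)}^t$ leaves four possibilities, and the incomparabilities $A\not\supseteq B$, $B\not\supseteq A$, $C\not\supseteq D$, $D\not\supseteq C$ (valid because $A\leftrightarrows B$ and $C\leftrightarrows D$ are transversal) rule out all but $K=B\sqcup C$; the right-hand side is handled dually. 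This finishes the proof that $[\iota_1(\Psi_1),\iota_2(\Psi_2)]=0$.

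It remains to assemble the isomorphism. The commutation gives a well-defined ring homomorphism $\mu\colon\Psi_1\otimes_\IZ\Psi_2\to\Omega$, $a\otimes b\mapsto\iota_1(a)\iota_2(b)$, with image the subalgebra $\iota_1(\Psi_1)\iota_2(\Psi_2)$. Comparing values on the generators $e_s,x_s$ shows that $\tau\circ\iota_1$ is the map $a\mapsto a\otimes1$ and $\tau\circ\iota_2$ is $b\mapsto1\otimes b$, so $\tau\circ\mu$ is the canonical map $\Psi_1\otimes_\IZ\Psi_2\to\Omega_1\otimes_\IZ\Omega_2$, which is injective (the $\Psi_i$ sitting as $\IZ$-direct summands of the $\Omega_i$); hence $\mu$ is injective. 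Moreover $\iota_1(\Psi_1)\iota_2(\Psi_2)=\Psi$: the inclusion $\iota_1(\Psi_1)\iota_2(\Psi_2)\subseteq\Psi$ holds since parabolic morphisms carry $\Psi_i$ into $\Psi$, and conversely every generator of $\Psi$ lies in the image, since $E_I=\iota_1(E_{I_1})\iota_2(E_{I_2})$ and a transversal edge of $\mathcal{Q}_W$, being transversal in a single factor, equals $\iota_1(X_{I_1J_1})\iota_2(E_{I_2})$ or $\iota_1(E_{I_1})\iota_2(X_{I_2J_2})$ by the collapsing computations above. Therefore $\mu$ is an isomorphism $\Psi_1\otimes_\IZ\Psi_2\cong\Psi$, as claimed.
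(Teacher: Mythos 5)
Your commutation argument is in substance the paper's own proof: reduce to the generators of the $\Psi_i$, collapse the parabolic-image sums $\iota_1(X_{AB})=\sum_Y X_{(A\sqcup Y)(B\sqcup Y)}$ and $\iota_2(E_C)$, observe that the two products of a transversal edge from each factor are the two ways around a square, and then invoke the $(\beta^{st})$-relation for commuting $s\in S_1$, $t\in S_2$ with endpoints $A\sqcup C$ and $B\sqcup D$, using the inclusion/transversal classification of nonzero edges together with $A\not\supseteq B$, $B\not\supseteq A$, $C\not\supseteq D$, $D\not\supseteq C$ to kill all but one summand on each side. That case analysis is correct (the paper phrases it as ``no inclusion edge can occur,'' you phrase it as four cases, but it is the same argument). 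The only place where you diverge from the paper is the final assembly of the isomorphism: the paper restricts $\tau$ to $\Psi$ (Lemma \ref{tensor_morphism} d.) and checks that this restriction and the map $j\colon a\otimes b\mapsto\iota_1(a)\iota_2(b)$ are mutually inverse on generators, whereas you deduce injectivity of your $\mu$ from injectivity of the canonical map $\Psi_1\otimes_\IZ\Psi_2\to\Omega_1\otimes_\IZ\Omega_2$, which you justify by asserting that $\Psi_i$ is a $\IZ$-module direct summand of $\Omega_i$. That assertion is nowhere established (the paper gives no $\IZ$-basis of $\Omega_i$ adapted to $\Psi_i$, nor even torsion-freeness of $\Omega_i/\Psi_i$), so as written this is the one unsupported step; either prove that purity statement or replace it by the paper's route, i.e. check $\tau\circ\mu=\mathrm{can}$ \emph{and} that the corestriction of $\tau|_\Psi$ composed with $\mu$ is the identity on the generators $E_I$ and transversal $X_{IJ}$ of $\Psi$, which yields injectivity of $\mu$ without any flatness input. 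Apart from that one sentence, the proof is correct and matches the paper.
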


\begin{proof}
$\iota_1(E_{I_1})$ and $\iota_2(E_{I_2})$ certainly commute with each other.

Let $I\leftrightarrows J$, $K\leftrightarrows L$ be transversal edges in the compatibility graphs $\mathcal{Q}_{W_1}$ and $\mathcal{Q}_{W_2}$ respectively. Then $\iota_1(X_{I J}) = \sum_{B} X_{I\sqcup B, J\sqcup B}$ and $\iota_2(X_{K L}) = \sum_{A} X_{A\sqcup K, A\sqcup L}$ and therefore
\begin{align*}
	\iota_1(X_{I J}) \iota_2(E_{K}) &= \sum_{A,B} X_{I\sqcup B, J\sqcup B} E_{A\sqcup K} \\
		&= X_{I \sqcup K, J\sqcup K} \\
	\iota_2(E_{K}) \iota_1(X_{I J}) &= \sum_{A,B} E_{A\sqcup K} X_{I\sqcup B, J\sqcup B} \\
		&= X_{I \sqcup K, J\sqcup K}
\end{align*}
So that $[\iota_1(\Psi_1),\iota_2(E_{K})]=0$ for all $K\subseteq S_2$. Furthermore the identities
\begin{align*}
	\iota_1(X_{I J}) \iota_2(X_{K L}) &= \sum_{A,B} X_{I\sqcup B, J\sqcup B} \cdot X_{A\sqcup K, A\sqcup L} \\
	&= X_{I \sqcup K, J\sqcup K} X_{J\sqcup K, J\sqcup L} \\
	\iota_2(X_{K L}) \iota_1(X_{I J}) &= \sum_{A,B} X_{A\sqcup K, A\sqcup L} \cdot X_{I\sqcup B, J\sqcup B} \\
	&= X_{I \sqcup K, I\sqcup L} X_{I\sqcup L, J\sqcup L}
\end{align*}
hold. It will now be proven that these two elements are actually the same by virtue of a $(\beta)$-relation. Choose $s\in I\setminus J$ and $t\in K\setminus L$ so that $st=ts$, $X_{I J}=X_{I J}^s$ and $X_{K L} = X_{K L}^t$. Then in $\Omega$ the $(\beta^{st})$-relation reads
\[\sum_{A,B} X_{I\sqcup K,A\sqcup B}^s X_{A\sqcup B,J\sqcup L}^t = \sum_{C,D} X_{I\sqcup K,C\sqcup D}^t X_{C\sqcup D,J\sqcup L}^s.\]

These two sums reduce to only one summand each: Assume first that $X_{I\sqcup K,A\sqcup B}^s$ is an inclusion edge. Then $K\supseteq B \implies L\setminus B \supseteq L\setminus K\neq\emptyset$ because $K\leftrightarrows L$ is transversal. This means that $X_{A\sqcup B,J\sqcup L}^t$ is also transversal and therefore $A=J$ which contradicts $I\supseteq A$ and $I\setminus J\neq\emptyset$. Therefore $X_{I\sqcup K,A\sqcup B}^s$ cannot be an inclusion edge. Similarly none of the other four occuring edge elements on both sides can be an inclusion edge.

Since all four elements are transversal edges, this implies $A=J$, $B=K$, $C=I$ and $D=L$. Therefore only one summand occurs on each side and the relation reduces to
\[X_{I\sqcup K,J\sqcup K}^s X_{J\sqcup K,J\sqcup L}^t = X_{I\sqcup K,I\sqcup L}^t X_{I\sqcup L,J\sqcup L}^s\]
which implies $[\iota_1(X_{IL}),\iota_2(X_{KL})]=0$ for all transversal edges $I\leftrightarrows J$, $K\leftrightarrows L$. Hence $[\iota_1(\Psi_1),\iota_2(\Psi_2)]=0$.

\medbreak
Therefore the inclusions $\iota_1:\Psi_1\to\Psi$, $\iota_2:\Psi_2\to\Psi$ induce a morphism $j:\Psi_1\otimes\Psi_2\to\Psi$. Restricting the morphism $\tau: \Omega\to\Omega_1\otimes\Omega_2$ gives a morphism $\tau_{|\Psi}:\Psi\to\Psi_1\otimes\Psi_2$ and it is easily verified that $\tau_{|\Psi}$ and $j$ are mutually inverse.
\end{proof}

\subsection{The reduction theorem}

We now prove the main theorem of this paper:

\begin{theorem}\label{maintheorem:statement}
If $W_1$ and $W_2$ satisfy the strong $W$-graph decomposition conjecture, then $W=W_1\times W_2$ does too.
\end{theorem}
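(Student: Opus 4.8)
The plan is the natural one: transport the two given families along the parabolic embeddings $\iota_1,\iota_2$ and multiply. Write $\Irr(W)=\Irr(W_1)\times\Irr(W_2)$, $d_{(\lambda_1,\lambda_2)}=d_{\lambda_1}d_{\lambda_2}$, and fix families $(F_i^{\lambda_i})_{\lambda_i\in\Irr(W_i)}$ witnessing the strong decomposition conjecture for $W_i$ ($i=1,2$). I would set
\[F^{(\lambda_1,\lambda_2)}\;:=\;\iota_1(F_1^{\lambda_1})\,\iota_2(F_2^{\lambda_2})\ \in\ \Omega.\]
Since $F_i^{\lambda_i}\in\Psi_i$ by (Z6) for $W_i$, Theorem~\ref{tensor_morphism:decomposition_psi} shows the two factors commute, so equivalently $F^\lambda=\iota_2(F_2^{\lambda_2})\iota_1(F_1^{\lambda_1})$, we have $F^\lambda\in\iota_1(\Psi_1)\iota_2(\Psi_2)\subseteq\Psi$ (which is already (Z6)), and under the isomorphism $\Psi\isomorphic\Psi_1\otimes\Psi_2$ the element $F^\lambda$ corresponds to $F_1^{\lambda_1}\otimes F_2^{\lambda_2}$. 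Because $\iota_1,\iota_2$ are unital ring homomorphisms with commuting images, (Z1) follows by multiplying the corresponding identities for $W_1$ and $W_2$ factorwise. For (Z2) I would use $E_I=\iota_1(E_{I_1})\iota_2(E_{I_2})$---which holds since $E_I\in\Psi$ and $\tau(E_I)=E_{I_1}\otimes E_{I_2}$ by Lemma~\ref{tensor_morphism}---together with (Z2) for the factors; a consequence worth isolating is that $F^\lambda$ then commutes with every $E_I$ and with $\iota_i(z)$ for every $z\in\Psi_i$.

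For (Z5) the crucial trick is that an edge can be absorbed into a single factor. Given an edge $I\leftarrow J$ of $\mathcal{Q}_W$, choose $s\in I\setminus J$; by the symmetry of the two factors we may assume $s\in S_1$, so $X_{IJ}=X_{IJ}^s=E_Ix_sE_J$ with $x_s=\iota_1(x_s)$. By (Z2), $F^\lambda X_{IJ}^sF^\lambda=E_I\,(F^\lambda x_sF^\lambda)\,E_J$, and writing the \emph{left} copy of $F^\lambda$ as $\iota_2(F_2^{\lambda_2})\iota_1(F_1^{\lambda_1})$ and the \emph{right} copy as $\iota_1(F_1^{\lambda_1})\iota_2(F_2^{\lambda_2})$, the two inner $\iota_1$-factors merge:
\[F^\lambda x_sF^\lambda=\iota_2(F_2^{\lambda_2})\,\iota_1\!\big(F_1^{\lambda_1}x_sF_1^{\lambda_1}\big)\,\iota_2(F_2^{\lambda_2}).\]
Now $F_1^{\lambda_1}x_sF_1^{\lambda_1}=\sum_{A,B\subseteq S_1}F_1^{\lambda_1}X_{AB}^sF_1^{\lambda_1}$ lies in $\Psi_1$ by (Z5) for $W_1$, so $\iota_1$ of it commutes with $\iota_2(F_2^{\lambda_2})$ and $F^\lambda x_sF^\lambda=\iota_1(F_1^{\lambda_1}x_sF_1^{\lambda_1})\iota_2(F_2^{\lambda_2})\in\Psi$; since $E_I,E_J\in\Psi$ this yields (Z5). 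For (Z4) I would use the canonical identification $kW\isomorphic kW_1\otimes_k kW_2$ (which restricts the $\iota_i$ appropriately): $F^\lambda kWF^\lambda$ then corresponds to $(F_1^{\lambda_1}kW_1F_1^{\lambda_1})\otimes(F_2^{\lambda_2}kW_2F_2^{\lambda_2})$, and $\psi_{\lambda_1}\otimes\psi_{\lambda_2}\colon k^{d_{\lambda_1}\times d_{\lambda_1}}\otimes k^{d_{\lambda_2}\times d_{\lambda_2}}=k^{d_\lambda\times d_\lambda}$ is the required surjection onto it.

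The remaining axiom (Z3) is where the real work lies. I would take $\preceq$ to be the product order of $\preceq_1$ and $\preceq_2$ on $\Irr(W_1)\times\Irr(W_2)$. Applying $\tau$ and using $\tau(F^\lambda)=F_1^{\lambda_1}\otimes F_2^{\lambda_2}$ gives $\tau(F^\lambda\Omega F^\mu)\subseteq(F_1^{\lambda_1}\Omega_1F_1^{\mu_1})\otimes(F_2^{\lambda_2}\Omega_2F_2^{\mu_2})$, which vanishes whenever $\lambda\not\preceq\mu$ by (Z3) for $W_1$ and $W_2$; hence in that case $F^\lambda\Omega F^\mu\subseteq\ker\tau$, and everything reduces to showing that $\ker\tau$ creates no upward morphisms between the $F^\lambda$. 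By Theorem~\ref{tensor_morphism:kernel}, $\ker\tau$ is generated as a two-sided ideal by the ``doubly-proper'' inclusion edges $X_{IJ}$ with $I_1\supsetneq J_1$ and $I_2\supsetneq J_2$. For such an edge $X$ one has $X=X_{IJ}^s=X_{IJ}^{s'}$ for suitable $s\in S_1$, $s'\in S_2$; running the computation above with arbitrary idempotents $F^\nu,F^{\nu'}$ in place of $F^\lambda$, once through $s$ and once through $s'$, and invoking (Z3) for $W_1$ and $W_2$ together with $F^\nu XF^\nu\in\Psi\cap\ker\tau=\{0\}$ (by (Z5) and the injectivity of $\tau$ on $\Psi$), I would deduce that $F^\nu XF^{\nu'}=0$ unless $\nu_1\prec_1\nu_1'$ and $\nu_2\prec_2\nu_2'$. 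Expanding $F^\lambda\Omega F^\mu=F^\lambda(\ker\tau)F^\mu=\sum_X\sum_{\nu,\nu'}(F^\lambda\Omega F^\nu)(F^\nu XF^{\nu'})(F^{\nu'}\Omega F^\mu)$ over doubly-proper edges $X$, and feeding in these vanishings, a noetherian induction over the finite poset $(\Irr(W),\preceq)$ should force $F^\lambda\Omega F^\mu=0$ for every $\lambda\not\preceq\mu$, which is (Z3).

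I expect this last step to be the main obstacle: since $\Omega$ is strictly larger than $\Omega_1\otimes\Omega_2$---the mixed inclusion edges are genuinely new generators---one has to rule out that an iterated composition of such edges, interleaved with the $F^\nu$, climbs in the product order, and arranging the inductive bookkeeping so that it actually closes is precisely the place where the strong form (Z5)--(Z6), and not merely (Z1)--(Z4), of the conjecture is indispensable.
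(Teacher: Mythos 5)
Your construction of the idempotents, and your treatment of (Z1), (Z2), (Z6) and (Z5), essentially coincide with the paper's (your derivation of (Z5) via $F^\lambda x_s F^\lambda=\iota_2(F_2^{\lambda_2})\iota_1(F_1^{\lambda_1}x_sF_1^{\lambda_1})\iota_2(F_2^{\lambda_2})$ and Theorem \ref{tensor_morphism:decomposition_psi} is a fine variant, and your observation that the diagonal compression of a doubly-proper edge dies because $\Psi\cap\ker\tau=0$ is a neat substitute for the paper's computation). The genuine gap is exactly where you suspected it: your argument for (Z3). Reducing to $F^\lambda\Omega F^\mu\subseteq\ker\tau$ and expanding $F^\lambda\ker(\tau)F^\mu=\sum_{X,\nu,\nu'}(F^\lambda\Omega F^\nu)(F^\nu XF^{\nu'})(F^{\nu'}\Omega F^\mu)$ over doubly-proper edges does not set up a well-founded induction: the vanishing of the interleaved factors $F^\lambda\Omega F^\nu$ and $F^{\nu'}\Omega F^\mu$ (for $\lambda\not\preceq\nu$, resp.\ $\nu'\not\preceq\mu$) is an instance of the very statement you are proving, and the new pairs $(\lambda,\nu)$, $(\nu',\mu)$ are not smaller than $(\lambda,\mu)$ in any monotone quantity --- $\nu,\nu'$ range over the whole poset and the word length only grows, so the recursion never terminates. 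The one thing that would make it close is nilpotency of $\ker\tau$ (then a maximal power $m$ with $z\in(\ker\tau)^m$ yields a contradiction), but in the paper nilpotency of $\ker\tau$ is itself \emph{deduced from} (Z3) for $W$, not available beforehand; your constraint that each $F^\nu XF^{\nu'}$ climbs strictly does not bound powers of $\ker\tau$, because the interleaved elements of $\Omega$ may drop back down --- controlling them is again (Z3).

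Moreover, your route never touches the case that carries the actual content in the paper: a \emph{mixed} inclusion edge $X_{IJ}$ with, say, $I_1\supsetneq J_1$ and $I_2=J_2$. Such an edge lies neither in $\Psi$ nor in $\ker\tau$, and while $\tau$ shows that $F^{\lambda\boxtimes\mu}X_{IJ}F^{\lambda'\boxtimes\mu'}$ with $\mu\neq\mu'$ lands in $\ker\tau$, ruling out that it is a \emph{nonzero} element of $\ker\tau$ is precisely what cannot be extracted from the generation statement of Theorem \ref{tensor_morphism:kernel} alone. The paper handles this by the claim (A): assuming $\mu\not\preceq_2\mu'$, a careful analysis of the $(\beta^{st})$-relations (with $s\in I_1\setminus J_1$, $t\in K\setminus L$ for transversal edges $K\leftrightarrows L$ of $\mathcal{Q}_{W_2}$) shows $F^{\lambda\boxtimes\mu}[\iota_1(X_{I_1J_1}),\iota_2(\Psi_2)]F^{\lambda'\boxtimes\mu'}=0$; then (Z6) for $W_2$ lets one insert $F^{\mu'}\in\Psi_2$ and commute it across $\iota_1(X_{I_1J_1})$, forcing $F^\mu F^{\mu'}\neq0$, i.e.\ $\mu=\mu'$ --- a contradiction. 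Some replacement for this $\beta$-relation argument is unavoidable; it is also the missing ingredient for your (Z4), since the identification of $F^\lambda kW F^\lambda$ (or $F^\lambda k\Omega F^\lambda$) with the tensor product of the factor pieces requires $F^\lambda\ker(\tau)F^\lambda=0$, which in the paper is again a consequence of (Z3) for $W$.
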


\begin{remark}
Remember that $\Irr(W)=\Set{\lambda\boxtimes\mu | \lambda\in\Irr(W_1), \mu\in\Irr(W_2)}$ where $\boxtimes$ denotes the exterior tensor product of representations.

The idea of the whole proof is to pretend that the morphism $\tau: \Omega\to \Omega_1\otimes\Omega_2$ is an isomorphism in which case it would be clear that the elements $F^\lambda\otimes F^\mu$ constitute a set of idempotents satisfying the decomposition conjecture. The next best thing to $\Omega=\Omega_1\otimes\Omega_2$ we have at our disposal is the isomorphism $\Psi=\Psi_1\otimes\Psi_2$ from theorem \ref{tensor_morphism:decomposition_psi}.

Of course $\tau$ has a non-trivial kernel and since there are a lot of elements outside $\Psi$ a host of complications arises. The main tool to overcome those is the extensive use of the $(\beta)$-relations
\[ \sum_{J\subseteq S} X_{IJ}^s X_{JK}^t = \sum_{L\subseteq S} X_{IL}^t X_{LK}^s\]
for $I,K\subseteq S$ with $s,t\in I\setminus K$ as a replacement for the commutator relation $[\Omega_1\otimes 1,1\otimes\Omega_2] = 0$. Note that in general there are edge elements corresponding to inclusion edges occurring in these sums which enables us to control the parts outside of $\Psi$ and inside the kernel of $\tau$.
\end{remark}

\begin{lemma}[Existence of idempotents]
If $W_1$ and $W_2$ satisfy (Z1), (Z2) and (Z6), then $W=W_1\times W_2$ satisfies these too by defining
\[F^{\lambda\boxtimes\mu}:=\iota_1(F^\lambda)\iota_2(F^\mu)\]
for all $\lambda\in\Irr(W_1)$ and $\mu\in\Irr(W_2)$.
\end{lemma}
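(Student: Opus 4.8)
The plan is to do everything through the isomorphism $j\colon\Psi_1\otimes\Psi_2\to\Psi$ furnished by Theorem \ref{tensor_morphism:decomposition_psi} (with inverse $\tau|_\Psi$), which lets one argue inside $\Omega_1\otimes\Omega_2$ as long as only elements of $\Psi$ are touched. The place where hypothesis (Z6) for the two factors enters is precisely this: because $F^\lambda\in\Psi_1$ and $F^\mu\in\Psi_2$, the element $F^{\lambda\boxtimes\mu}=\iota_1(F^\lambda)\iota_2(F^\mu)$ is nothing but $j(F^\lambda\otimes F^\mu)$; and, since each $E_{I_i}$ is one of the generators of $\Psi_i$, each $E_I$ with $I=I_1\sqcup I_2$ equals $j(E_{I_1}\otimes E_{I_2})$ by Lemma \ref{tensor_morphism}(a) (read as a statement about $\tau|_\Psi=j^{-1}$). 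So the first step is to record these two identifications; after that the three properties are purely formal.

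\textbf{Properties (Z6) and (Z1).}
For (Z6) there is nothing left to prove: $F^{\lambda\boxtimes\mu}$ lies in the image of $j$, hence in $\Psi$. For (Z1), since $j$ is a unital ring isomorphism it suffices to verify the corresponding statements for the pure tensors $F^\lambda\otimes F^\mu\in\Psi_1\otimes\Psi_2$: orthogonality and idempotence come from $(F^\lambda\otimes F^\mu)(F^{\lambda'}\otimes F^{\mu'})=F^\lambda F^{\lambda'}\otimes F^\mu F^{\mu'}$ together with (Z1) for $W_1$ and $W_2$, and $\sum_{\lambda,\mu}F^\lambda\otimes F^\mu=\bigl(\sum_\lambda F^\lambda\bigr)\otimes\bigl(\sum_\mu F^\mu\bigr)=1\otimes 1$. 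Equivalently, without invoking $j$, one rearranges $\iota_1(F^\lambda)\iota_2(F^\mu)\iota_1(F^{\lambda'})\iota_2(F^{\mu'})$ using $[\iota_1(\Psi_1),\iota_2(\Psi_2)]=0$ and applies (Z1) for the factors, using that $\iota_i$ is unital (which is clear from the formula $j(E_A)=\sum_{A'\cap J=A}E_{A'}$).

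\textbf{Property (Z2).}
Fix $I=I_1\sqcup I_2\subseteq S$. Then
\[E_I F^{\lambda\boxtimes\mu}=j\bigl((E_{I_1}\otimes E_{I_2})(F^\lambda\otimes F^\mu)\bigr)=j\bigl(E_{I_1}F^\lambda\otimes E_{I_2}F^\mu\bigr)=j\bigl(F^\lambda E_{I_1}\otimes F^\mu E_{I_2}\bigr)=F^{\lambda\boxtimes\mu}E_I,\]
where the middle equality is (Z2) for $W_1$ and $W_2$. Again this may instead be run directly inside $\Omega$ by writing $E_I=\iota_1(E_{I_1})\iota_2(E_{I_2})$, noting that $\iota_1(E_{I_1})$ commutes with $\iota_2(F^\mu)$ and $\iota_2(E_{I_2})$ with $\iota_1(F^\lambda)$ by Theorem \ref{tensor_morphism:decomposition_psi}, and then using (Z2) for the factors.

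\textbf{Expected obstacle.}
I do not anticipate a genuine obstacle in this lemma: the entire content is packaged in Theorem \ref{tensor_morphism:decomposition_psi}, and the verification is formal once one observes that both $F^{\lambda\boxtimes\mu}$ and every $E_I$ live in $\Psi$ and correspond under $j$ to pure tensors. The only points needing a moment's care are that $\iota_i$ preserves the unit — so that the completeness relation in (Z1) survives — and that $E_I$ really factors as $\iota_1(E_{I_1})\iota_2(E_{I_2})$; both are immediate from the explicit description of the parabolic morphisms. The substantive difficulties, namely transferring (Z3), (Z4) and (Z5) from $W_1,W_2$ to $W$ — where the kernel of $\tau$ and the edge elements outside $\Psi$ must be controlled via the $(\beta)$-relations — belong to the later parts of the argument, not here.
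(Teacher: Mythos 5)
Your proposal is correct and follows essentially the same route as the paper: both rest on Theorem \ref{tensor_morphism:decomposition_psi} together with (Z6) for the factors to get $[\iota_1(F^\lambda),\iota_2(F^\mu)]=0$ (equivalently, to view $F^{\lambda\boxtimes\mu}$ and each $E_I=\iota_1(E_{I_1})\iota_2(E_{I_2})$ as elements of $\Psi\isomorphic\Psi_1\otimes\Psi_2$), after which (Z1), (Z2) and (Z6) follow formally from the corresponding properties of $W_1$ and $W_2$. You merely spell out the routine verifications (unitality of the parabolic morphisms, the pure-tensor computations) that the paper leaves implicit.
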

\begin{proof}
Since $W_1$, $W_2$ satisfy (Z6), $F^\lambda$ and $F^\mu$ are contained in $\Psi_1$ and $\Psi_2$ respectively so that $[\iota_1(F^\lambda),\iota_2(F^\mu)]=0$ by Theorem \ref{tensor_morphism:decomposition_psi}. Therefore $F^{\lambda\boxtimes\mu}$ is an idempotent and (Z1) and (Z6) are satisfied.

\medbreak
(Z2) follows from $E_{I_1\sqcup I_2}=\iota_1(E_{I_1})\iota_2(E_{I_2})$ and (Z2) for the factors.
\end{proof}

\begin{lemma}[Partial order]
If $W_1$ and $W_2$ satisfy (Z1), (Z2), (Z6) as well as (Z3), then $W$ also satisfies (Z3).

The partial order $\preceq$ on $\Irr(W)=\Irr(W_1)\times\Irr(W_2)$ can be chosen as the product of $\preceq_1$ and $\preceq_2$, i.e.
\[\lambda\boxtimes\mu \preceq \lambda'\boxtimes\mu' \iff \lambda\preceq_1\lambda' \wedge \mu\preceq_2\mu'\]
\end{lemma}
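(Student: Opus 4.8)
The plan is as follows. First, the product order $\preceq$ of $\preceq_1$ and $\preceq_2$ is again a partial order on $\Irr(W)=\Irr(W_1)\times\Irr(W_2)$, hence an admissible choice; it remains to verify (Z3) for it. Since the whole set-up of this section is symmetric under interchanging $W_1$ and $W_2$ and $F^{\lambda\boxtimes\mu}=\iota_1(F^\lambda)\iota_2(F^\mu)=\iota_2(F^\mu)\iota_1(F^\lambda)$ is symmetric as well, it suffices to prove
\[F^{\lambda\boxtimes\mu}\,\Omega\,F^{\lambda'\boxtimes\mu'}\neq 0\ \Longrightarrow\ \lambda\preceq_1\lambda';\]
running the same argument with $W_1$ and $W_2$ exchanged then gives $\mu\preceq_2\mu'$, and together $\lambda\boxtimes\mu\preceq\lambda'\boxtimes\mu'$.

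I would prove this implication by contraposition. For a fixed $\lambda'\in\Irr(W_1)$ put $G^{\lambda'}:=\sum_{\lambda''\preceq_1\lambda'}F^{\lambda''}$; by (Z1) for $W_1$ it is an idempotent, by (Z6) it lies in $\Psi_1$, and (Z3) together with (Z1) for $W_1$ give $(1-G^{\lambda'})\,\Omega_1\,F^{\lambda'}=0$. Assume now $\lambda\not\preceq_1\lambda'$. Then $\lambda$ is not among the summands of $G^{\lambda'}$, so $(1-G^{\lambda'})F^{\lambda}=F^{\lambda}$ by orthogonality of the $F^{\lambda''}$, whence $F^{\lambda\boxtimes\mu}=\iota_1(1-G^{\lambda'})\,F^{\lambda\boxtimes\mu}$; writing in addition $F^{\lambda'\boxtimes\mu'}=\iota_2(F^{\mu'})\iota_1(F^{\lambda'})$ (legitimate by Theorem \ref{tensor_morphism:decomposition_psi}, as $F^{\lambda'}\in\Psi_1$ and $F^{\mu'}\in\Psi_2$ by (Z6)), we get
\[F^{\lambda\boxtimes\mu}\,\Omega\,F^{\lambda'\boxtimes\mu'}\ \subseteq\ \iota_1(1-G^{\lambda'})\,\Omega\,\iota_1(F^{\lambda'}).\]
Thus the lemma reduces to the claim that the identity $(1-G^{\lambda'})\,\Omega_1\,F^{\lambda'}=0$, a consequence of (Z3) for $W_1$, \emph{propagates} from $\iota_1(\Omega_1)$ to all of $\Omega$: that $\iota_1(1-G^{\lambda'})\,\Omega\,\iota_1(F^{\lambda'})=0$.

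To this end I would first use $\tau$ to localise the difficulty. Since $\tau\circ\iota_1$ is the embedding $y\mapsto y\otimes1$ (immediate from Lemma \ref{tensor_morphism}), applying $\tau$ to $\iota_1(1-G^{\lambda'})\,\omega\,\iota_1(F^{\lambda'})$ and expanding $\tau(\omega)=\sum_i a_i\otimes b_i$ gives $\sum_i\big((1-G^{\lambda'})a_iF^{\lambda'}\big)\otimes b_i=0$, so $\iota_1(1-G^{\lambda'})\,\Omega\,\iota_1(F^{\lambda'})\subseteq\ker\tau$, which by Theorem \ref{tensor_morphism:kernel} is the two-sided ideal generated by the mixed edges $X_{IJ}$ with $I_1\supsetneq J_1$ and $I_2\supsetneq J_2$. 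It remains to prove that an element lying simultaneously in this ideal and in $\iota_1(1-G^{\lambda'})\,\Omega\,\iota_1(F^{\lambda'})$ is zero.

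This last point is the heart of the matter, and the step I expect to be the main obstacle. I would span $\Omega$ by alternating products $\iota_1(a_0)\iota_2(b_1)\iota_1(a_1)\cdots\iota_2(b_n)\iota_1(a_n)$ with $a_i\in\Omega_1$, $b_j\in\Omega_2$, absorb the outer idempotents $\iota_1(1-G^{\lambda'})$ and $\iota_1(F^{\lambda'})$ into $a_0$ and $a_n$, and then try to move the $\iota_1(\Omega_1)$-factors leftward past the $\iota_2(\Omega_2)$-factors until they merge into one factor $\iota_1\big((1-G^{\lambda'})(\cdots)F^{\lambda'}\big)\in\iota_1\big((1-G^{\lambda'})\Omega_1F^{\lambda'}\big)=0$. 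What obstructs a naive commutation are the commutators $[\iota_2(b_j),\iota_1(a_i)]$, which are nonzero but lie in $\ker\tau$; here the $(\beta)$-relations $\sum_{J\subseteq S}X_{IJ}^sX_{JK}^t=\sum_{L\subseteq S}X_{IL}^tX_{LK}^s$ taken with $s\in S_1$, $t\in S_2$ and $st=ts$ — precisely the substitute for the missing $[\Omega_1\otimes1,1\otimes\Omega_2]=0$ — rewrite a crossing of an $S_1$-edge past an $S_2$-edge as a sum of two-step paths running through inclusion and mixed edges, and the transversal terms among these are killed outright by $[\iota_1(\Psi_1),\iota_2(\Psi_2)]=0$ (Theorem \ref{tensor_morphism:decomposition_psi}). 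The delicate point is to turn this rewriting into a terminating induction: one must equip the alternating products with a complexity (word length, plus the number and the sizes $|I_1\setminus J_1|+|I_2\setminus J_2|$ of the inclusion and mixed edges occurring, plus the depth in the $\ker\tau$-filtration) that strictly decreases at every application of $(\beta)$, all the while keeping the intermediate expressions inside $\iota_1(1-G^{\lambda'})\,\Omega\,\iota_1(F^{\lambda'})$ so that the vanishing can finally be read off.
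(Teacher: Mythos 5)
Your set-up (symmetry reduction, contraposition via $G^{\lambda'}:=\sum_{\lambda''\preceq_1\lambda'}F^{\lambda''}$, and the observation that $\iota_1(1-G^{\lambda'})\,\Omega\,\iota_1(F^{\lambda'})\subseteq\ker\tau$) is correct, but it does not advance the problem: since $\iota_1(1-G^{\lambda'})=\sum_{\lambda''\not\preceq_1\lambda',\,\mu''}F^{\lambda''\boxtimes\mu''}$ and $\iota_1(F^{\lambda'})=\sum_{\mu'}F^{\lambda'\boxtimes\mu'}$, the claim $\iota_1(1-G^{\lambda'})\,\Omega\,\iota_1(F^{\lambda'})=0$ is literally a restatement of the half of (Z3) you set out to prove. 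All the mathematical content therefore sits in your final step, and that step is only a plan: you propose to span $\Omega$ by alternating products, push $\iota_1$-factors past $\iota_2$-factors using the $(\beta)$-relations, and close the induction with some complexity measure that you do not exhibit. This is a genuine gap, and not a routine one. The commutator $[\iota_1(X_{I_1J_1}),\iota_2(X_{KL})]$ for an \emph{inclusion} edge $I_1\supsetneq J_1$ of $\mathcal{Q}_{W_1}$ is genuinely nonzero in $\Omega$ (it lies in $\ker\tau$ but not in $0$), and applying a $(\beta)$-relation to such a crossing produces sums over intermediate vertices that again contain inclusion and mixed edges; nothing in your sketch prevents the rewriting from looping or growing, and "transversal terms are killed by $[\iota_1(\Psi_1),\iota_2(\Psi_2)]=0$" does not apply because general $a_i\in\Omega_1$ are not in $\Psi_1$.

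The paper resolves exactly this difficulty by a different mechanism, and it is worth seeing why it works where a free rewriting does not. It first reduces (Z3) to single edge elements $F^{\lambda\boxtimes\mu}X_{IJ}F^{\lambda'\boxtimes\mu'}$, disposes of transversal edges and of inclusion edges with both $I_1\supsetneq J_1$ and $I_2\supsetneq J_2$ directly (the latter because such an edge equals both $X_{IJ}^s$ with $s\in S_1$ and $X_{IJ}^t$ with $t\in S_2$), and only then attacks the remaining case $I_1\supsetneq J_1$, $I_2=J_2$ by contradiction: assuming $\mu\not\preceq_2\mu'$ it proves the \emph{sandwiched} statement $F^{\lambda\boxtimes\mu}[\iota_1(X_{I_1J_1}),\iota_2(\Psi_2)]F^{\lambda'\boxtimes\mu'}=0$. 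There the unwanted inclusion-edge summands arising from the $(\beta)$-relations are not cancelled by a complexity argument but are annihilated by the idempotent sandwich together with the hypothesis $\mu\not\preceq_2\mu'$ and the cases already established; then commuting $\iota_2(F^{\mu'})$ through (using (Z6)) forces $\mu=\mu'$, a contradiction. If you want to salvage your route, you would need to supply precisely such a sandwiching hypothesis to kill the extra terms, at which point you are essentially reconstructing the paper's claim (A); as written, the termination and vanishing of your rewriting procedure is asserted, not proved.
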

\begin{proof}
Assume $F^{\lambda\boxtimes\mu} X_{IJ} F^{\lambda'\boxtimes\mu'}\neq 0$. We want to prove $\lambda\preceq_1\lambda'$ and $\mu\preceq_2\mu'$. Consider the following cases:

\medbreak
Case 1: $I\leftrightarrows J$ is transversal.

Case 1a: $I_2=J_2=B$ so that $X_{IJ}=\iota_1(X_{I_1,J_1}) \iota_2(E_B)$ and $I_1\leftrightarrows J_1$ in $\mathcal{Q}_{W_1}$. Therefore
\begin{align*}
	F^{\lambda\boxtimes\mu} X_{IJ} F^{\lambda'\boxtimes\mu'} &= \iota_1(F^\lambda) \iota_1(X_{I_1 J_1}) \iota_1(F^{\lambda'}) \iota_2(F^\mu) \iota_2(E_B) \iota_2(F^{\mu'}) \\
	&= \iota_1(F^\lambda X_{I_1 J_1} F^{\lambda'}) \iota_2(F^\mu E_B F^{\mu'})
\end{align*}
by Theorem \ref{tensor_morphism:decomposition_psi}. Thus $F^\lambda X_{I_1 J_1} F^{\lambda'}\neq 0$ and $F^\mu E_B F^{\mu'}\neq 0$. From this we conclude $ \lambda\preceq_1\lambda'$ and $\mu=\mu'$ because $W_1$ satisfies (Z3) and $W_2$ satisfies (Z1).

\medbreak
Case 1b: $I_1=J_1=A$ similarly implies $\lambda=\lambda'$ and $\mu\preceq_2\mu'$.

\bigbreak
Case 2: $I\supseteq J$ is an inclusion edge.

\medbreak
Case 2a: $I_1\supsetneq J_1$ and $I_2\supsetneq J_2$.

In that case there are $s\in I_1\setminus J_1$ and $t\in I_2\setminus J_2$ so that $X_{IJ}=X_{IJ}^s=X_{IJ}^t$. Therefore $X_{IJ}=E_I \iota_1(X_{I_1 J_1}^s) E_J = E_I \iota_2(X_{I_2 J_2}^t) E_J$ so that
\begin{align*}
F^{\lambda\boxtimes\mu}X_{IJ}F^{\lambda'\boxtimes\mu'} &= E_I \iota_2(F^\mu) \iota_1(F^\lambda X_{I_1 J_1}^s F^{\lambda'}) \iota_2(F^{\mu'}) E_J \\
	&= E_I \iota_1(F^\lambda) \iota_2(F^\mu X_{I_2 J_2}^t F^{\mu'}) \iota_1(F^{\lambda'}) E_J
\end{align*}
which implies $F^\lambda X_{I_1 J_1}^s F^{\lambda'}\neq 0$ and $F^\mu X_{I_2 J_2}^t F^{\mu'}\neq 0$. If (Z3) holds for $W_1$ and $W_2$, then $\lambda\preceq_1\lambda'$ and $\mu\preceq_2\mu'$.

\medbreak
Case 2b: $I_1\supsetneq J_1$ and $I_2=J_2$. (And case 2c: $I_1 \supsetneq J_1$ and $I_2=J_2$ for symmetry reasons)

The first observation is again
\begin{align*}
F^{\lambda\boxtimes\mu} X_{IJ} F^{\lambda'\boxtimes\mu'} &= \iota_2(F^\mu) \iota_1(F^\lambda) \iota_2(E_{I_2}) \iota_1(X_{I_1 J_1}^{s_1})\iota_2(E_{J_2}) \iota_1(F^{\lambda'}) \iota_2(F^{\mu'}) \\
	&= \iota_2(F^\mu E_{I_2}) \iota_1(F^\lambda X_{I_1 J_1} F^{\lambda'}) \iota_2(E_{J_2} F^{\mu'})
\end{align*}
If (Z3) holds for $W_1$, then $\lambda\preceq_1\lambda'$. Now assume $\mu\not\preceq_2\mu'$. We will derive a contradiction from that. The key is the following statement:
\[\tag{A}\label{maintheorem:claim_A}
\mu\not\preceq_2\mu' \implies F^{\lambda\boxtimes\mu} [\iota_1(X_{I_1 J_1}),\iota_2(\Psi_2)]F^{\lambda'\boxtimes\mu'}=0\]

We use the equation
\[\iota_1(X_{I_1 J_1}) = \sum_{A\supseteq B} X_{I_1 \sqcup A, J_1\sqcup B}.\]

Given any $C\subseteq S_2$ we infer
\begin{align*}
	F^{\lambda\boxtimes\mu} \iota_1(X_{I_1 J_1}) \iota_2(E_C) F^{\lambda'\boxtimes\mu'} &= \sum_{A\supseteq C} F^{\lambda\boxtimes\mu} X_{I_1 \sqcup A,J_1\sqcup C} F^{\lambda'\boxtimes\mu'} \\
	&= F^{\lambda\boxtimes\mu} X_{I_1 \sqcup C,J_1\sqcup C} F^{\lambda'\boxtimes\mu'}
\end{align*}
because the occurrence of a summand with $A\supsetneq C$ would imply $\mu\preceq_2\mu'$ by step 2a contrary to our assumption. Analogously $F^{\lambda\boxtimes\mu} \iota_2(E_C) \iota_1(X_{I_1 J_1}) F^{\lambda'\boxtimes\mu'} = F^{\lambda\boxtimes\mu} X_{I_1 \sqcup C,J_1\sqcup C} F^{\lambda'\boxtimes\mu'}$ and therefore $F^{\lambda\boxtimes\mu} [\iota_1(X_{I_1 J_1}),\iota_2(E_C)] F^{\lambda'\boxtimes\mu'} = 0$.

\medbreak
Now let $K\leftrightarrows L$ be any transversal edge in $\mathcal{Q}_{W_2}$. Then
\begin{align*}
	F^{\lambda\boxtimes\mu} \iota_1(X_{I_1 J_1}) \iota_2(X_{KL}) F^{\lambda'\boxtimes\mu'} &= \sum_{A\supseteq B,C} F^{\lambda\boxtimes\mu} X_{I_1 \sqcup A,J_1\sqcup B} X_{C \sqcup K,C\sqcup L} F^{\lambda'\boxtimes\mu'} \\
	&= \sum_{A\supseteq K} F^{\lambda\boxtimes\mu} X_{I_1 \sqcup A,J_1\sqcup K} X_{J_1 \sqcup K,J_1\sqcup L} F^{\lambda'\boxtimes\mu'} \\
	&= \sum_{\substack{A\supseteq K \\ \lambda'',\mu''}} F^{\lambda\boxtimes\mu} X_{I_1 \sqcup A,J_1\sqcup K} F^{\lambda''\boxtimes\mu''} X_{J_1 \sqcup K,J_1\sqcup L} F^{\lambda'\boxtimes\mu'}
\end{align*}
Now $J_1 \sqcup K \leftrightarrows J_1\sqcup L$ is a transversal edge in $\mathcal{Q}_W$ and if a summand is nonzero this implies $\mu''\preceq_2\mu'$ by Step 1b. If one of the summands had a proper inclusion $A\supsetneq K$, then additionally $\mu\preceq_2\mu''$ by Step 2a contrary to our assumption $\mu\not\preceq_2\mu'$. Therefore
\begin{align*}
	F^{\lambda\boxtimes\mu} \iota_1(X_{I_1 J_1}) \iota_2(X_{KL}) F^{\lambda'\boxtimes\mu'} &= \sum_{\lambda'',\mu''} F^{\lambda\boxtimes\mu} X_{I_1 \sqcup K,J_1\sqcup K} F^{\lambda''\boxtimes\mu''} X_{J_1 \sqcup K,J_1\sqcup L} F^{\lambda'\boxtimes\mu'} \\
	&=F^{\lambda\boxtimes\mu} X_{I_1 \sqcup K,J_1\sqcup K} X_{J_1 \sqcup K,J_1\sqcup L} F^{\lambda'\boxtimes\mu'} \tag{1}
\end{align*}
And similarly one verifies
\[F^{\lambda\boxtimes\mu} \iota_2(X_{KL}) \iota_1(X_{I_1 J_1}) F^{\lambda'\boxtimes\mu'} = F^{\lambda\boxtimes\mu}  X_{I_1 \sqcup K,I_1\sqcup L} X_{I_1 \sqcup L,J_1\sqcup L}F^{\lambda'\boxtimes\mu'} \tag{2}.\]

Now we choose $s\in I_1\setminus J_1$ and $t\in K\setminus L$ and apply the $(\beta^{st})$-relation
\[\sum_{A,B} X_{I_1\sqcup K,A\sqcup B}^s X_{A\sqcup B,J_1\sqcup L}^t = \sum_{C,D} X_{I_1\sqcup K,C\sqcup D}^t X_{C\sqcup D,J_1\sqcup L}^s.\]
Consider the left hand side and assume first that $I_1\sqcup K \leftrightarrows A\sqcup B$ is transversal in $\mathcal{Q}_W$. This is only possible if $K=B$ and if $I_1\leftrightarrows A$ in $\mathcal{Q}_{W_1}$. Then $L\setminus B=L\setminus K\neq\emptyset$ and $A\sqcup B \leftrightarrows J_1\sqcup L$ is also transversal in $\mathcal{Q}_W$. This in turn implies $A=J_1$ so that $I_1\leftrightarrows J_1$ in contradiction to our assumption $I_1 \supsetneq J_1$.

Therefore only inclusion edges $I_1\sqcup K \supseteq A \sqcup B$ occur on the left hand side, i.e. $I_1\supseteq A$ and $K\supseteq B$. Again this implies $L\setminus B\supseteq L\setminus K\neq\emptyset$ so that again $A\sqcup B \leftrightarrows J_1\sqcup L$ which implies $A=J_1$ and $B \leftrightarrows L$ in $\mathcal{Q}_{W_2}$.


Now multiply the left hand side with $F^{\lambda\boxtimes\mu}$ and $F^{\lambda'\boxtimes\mu'}$ and insert additional idempotents:
\[\sum_{A,B} F^{\lambda\boxtimes\mu} X_{I_1\sqcup K,A\sqcup B}^s X_{A\sqcup B,J_1\sqcup L}^t F^{\lambda'\boxtimes\mu'} = \sum_{\substack{B,\lambda'',\mu'' \\ K\supseteq B \wedge B \leftrightarrows L}} F^{\lambda\boxtimes\mu} X_{I_1\sqcup K,J_1\sqcup B}^s F^{\lambda''\boxtimes\mu''} X_{J_1\sqcup B,J_1\sqcup L}^t F^{\lambda'\boxtimes\mu'}\]
If a summand is nonzero then $\lambda''=\lambda'$ and $\mu''\preceq_2\mu'$ by Step 1b. If there was a nonzero summand with $B\supsetneq K$ then $\mu\preceq_2\mu''$ by Step 2a contrary to our assumption $\mu\not\preceq_2\mu'$. Therefore the only nonzero summands are those with $B=K$. Thus
\begin{align*}
	\sum_{A,B} F^{\lambda\boxtimes\mu} X_{I_1\sqcup K,A\sqcup B}^s X_{A\sqcup B,J_1\sqcup L}^t F^{\lambda'\boxtimes\mu'} &= \sum_{\lambda'',\mu''} F^{\lambda\boxtimes\mu} X_{I_1\sqcup K,J_1\sqcup K}^s F^{\lambda''\boxtimes\mu''} X_{J_1\sqcup K,J_1\sqcup L}^t F^{\lambda'\boxtimes\mu'} \\
	&= F^{\lambda\boxtimes\mu} X_{I_1\sqcup K,J_1\sqcup K}^s X_{J_1\sqcup K,J_1\sqcup L}^t F^{\lambda'\boxtimes\mu'} \\
	&\overset{(1)}{=} F^{\lambda\boxtimes\mu} \iota_1(X_{I_1 J_1}) \iota_2(X_{KL}) F^{\lambda'\boxtimes\mu'}
\end{align*}
Similarly one simplifies the right hand side of the $(\beta^{st})$-Relation to
\begin{align*}
	\sum_{C,D} F^{\lambda\boxtimes\mu} X_{I_1\sqcup K,C\sqcup D}^t X_{C\sqcup D,J_1\sqcup L}^s F^{\lambda'\boxtimes\mu'} &= F^{\lambda\boxtimes\mu} X_{I_1\sqcup K,I_1\sqcup L}^s X_{I_1\sqcup L,J_1\sqcup L}^t F^{\lambda'\boxtimes\mu'} \\
	&\overset{(2)}{=} F^{\lambda\boxtimes\mu}  \iota_2(X_{KL}) \iota_1(X_{I_1 J_1}) F^{\lambda'\boxtimes\mu'}
\end{align*}
so that $F^{\lambda\boxtimes\mu} [\iota_1(X_{I_1 J_1}), \iota_2(X_{KL})] F^{\lambda'\boxtimes\mu'} = 0$ and by induction one can now show $F^{\lambda\boxtimes\mu} [\iota_1(X_{I_1 J_1}), \iota_2(X_{K_0 K_1} X_{K_1 K_2} \ldots X_{K_{r-1} K_r})] F^{\lambda'\boxtimes\mu'} = 0$ for all $K_0 \leftrightarrows K_1 \leftrightarrows \ldots \leftrightarrows K_r$ in $\mathcal{Q}_{W_2}$. That finally proves \eqref{maintheorem:claim_A}.

Returning to our objective of disproving $\mu \not\preceq_2 \mu'$ we find
\begin{align*}
	0 &\neq F^{\lambda\boxtimes\mu} \iota_1(X_{I_1 J_1}) F^{\lambda'\boxtimes\mu'} \\
	&= F^{\lambda\boxtimes\mu} \iota_1(X_{I_1 J_1}) \iota_2(F^{\mu'}) \iota_1(F^{\lambda'}) \\
	&\overset{\eqref{maintheorem:claim_A}}{=} F^{\lambda\boxtimes\mu} \iota_2(F^{\mu'}) \iota_1(X_{I_1 J_1}) F^{\lambda'\boxtimes\mu'} \\
	&= \iota_1(F^\lambda) \iota_2(F^\mu F^{\mu'}) \iota_1(X_{I_1 J_1}) F^{\lambda'\boxtimes\mu'}
\end{align*}
because $F^{\mu'}\in\Psi_2$ by (Z6). This shows $\mu=\mu'$ because $W_2$ satisfies (Z1) and that is the final contradiction to the assumption $\mu\not\preceq_2\mu'$ and hence (Z3) for $W$ is finally proven.
\end{proof}

\begin{lemma}
If $W_1$ and $W_2$ satisfy (Z1), (Z2), (Z6) as well as (Z5), then $W$ also satisfies (Z5).
\end{lemma}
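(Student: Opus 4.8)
The plan is to run through the possible shapes of the edge $I\leftarrow J$ in $\mathcal{Q}_W$ and to reduce each case to (Z5) for one of the factors. Write $F:=F^{\lambda\boxtimes\mu}=\iota_1(F^\lambda)\iota_2(F^\mu)$; by the lemma on the existence of idempotents (which only uses (Z1), (Z2), (Z6) for the factors) this is an idempotent lying in $\Psi(W,S)$. The single tool needed is the commutation $[\iota_1(\Psi_1),\iota_2(\Psi_2)]=0$ from Theorem \ref{tensor_morphism:decomposition_psi}: because $W_1,W_2$ satisfy (Z6) we have $F^\lambda\in\Psi_1$ and $F^\mu\in\Psi_2$, so in particular $\iota_1(F^\lambda E_A)$ commutes with $\iota_2(F^\mu E_B)$ for all $A\subseteq S_1$, $B\subseteq S_2$.

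If $I\leftrightarrows J$ is transversal, then $X_{IJ}\in\Psi(W,S)$ by definition, hence $FX_{IJ}F\in\Psi(W,S)$ because $\Psi(W,S)$ is a subalgebra containing $F$. So suppose $I\supseteq J$ is an inclusion edge. By the description of the edges of $\mathcal{Q}_W$ recalled at the start of this section we may assume $I_1\supsetneq J_1$ (the case $I_1=J_1$, $I_2\supsetneq J_2$ being symmetric, with the two factors interchanged); fix $s\in I_1\setminus J_1$. A direct check with the explicit formula for the parabolic morphism $\iota_1$ gives $E_I\,\iota_1(X^s_{I_1J_1})\,E_J=X^s_{IJ}=X_{IJ}$ as well as $E_I=\iota_1(E_{I_1})\iota_2(E_{I_2})$ and $E_J=\iota_1(E_{J_1})\iota_2(E_{J_2})$. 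I would then expand
\[FX_{IJ}F=\iota_1(F^\lambda)\iota_2(F^\mu)\,\iota_1(E_{I_1})\iota_2(E_{I_2})\,\iota_1(X^s_{I_1J_1})\,\iota_1(E_{J_1})\iota_2(E_{J_2})\,\iota_1(F^\lambda)\iota_2(F^\mu)\]
and rearrange using the commutation above: group the four $\iota$-factors at each end into $\iota_1(F^\lambda E_{I_1})\iota_2(F^\mu E_{I_2})$ and $\iota_1(E_{J_1}F^\lambda)\iota_2(E_{J_2}F^\mu)$, absorb $\iota_1(E_{J_1}F^\lambda)$ into $\iota_1(X^s_{I_1J_1})$ (since $X^s_{I_1J_1}E_{J_1}=X^s_{I_1J_1}$), and finally move $\iota_1(F^\lambda E_{I_1})$ to the right past $\iota_2(F^\mu E_{I_2})$ so that it becomes adjacent to the remaining $\iota_1$-factor and merges with it (using $E_{I_1}X^s_{I_1J_1}=X^s_{I_1J_1}$). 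The outcome is
\[FX_{IJ}F=\iota_2(F^\mu E_{I_2})\,\iota_1\!\big(F^\lambda X^s_{I_1J_1}F^\lambda\big)\,\iota_2(E_{J_2}F^\mu).\]
By (Z5) for $W_1$ the middle factor $F^\lambda X^s_{I_1J_1}F^\lambda$ lies in $\Psi_1$, while $F^\mu E_{I_2}$ and $E_{J_2}F^\mu$ lie in $\Psi_2$; since the parabolic morphisms send $\Psi_1$ and $\Psi_2$ into the subalgebra $\Psi(W,S)$, the product lies in $\Psi(W,S)$, which is (Z5) for $W$.

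The argument is essentially bookkeeping; the only points requiring attention are to make sure every factor that is commuted past another lies in an $\iota$-image of the relevant $\Psi_i$ — this is exactly where (Z6) for the factors enters, via Theorem \ref{tensor_morphism:decomposition_psi} — and to work with the parabolic lift $\iota_1(X^s_{I_1J_1})$ of the edge rather than with $X_{IJ}$ itself, since $X_{IJ}$, being an inclusion edge, cannot be commuted across $\iota_2$-images. In contrast to the earlier lemmas of this section, no $(\beta)$-relations are needed here; the work has already been done in proving $\Psi\isomorphic\Psi_1\otimes\Psi_2$.
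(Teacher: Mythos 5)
Your proposal is correct and follows the paper's own argument: transversal edges are handled by the definition of $\Psi$, and for an inclusion edge with $I_1\supsetneq J_1$ you rewrite $X_{IJ}$ via the parabolic lift $\iota_1(X^s_{I_1J_1})$, commute the $\iota_1(\Psi_1)$- and $\iota_2(\Psi_2)$-factors using (Z6) and Theorem \ref{tensor_morphism:decomposition_psi}, and arrive at exactly the paper's identity $FX_{IJ}F=\iota_2(F^\mu E_{I_2})\,\iota_1(F^\lambda X_{I_1J_1}F^\lambda)\,\iota_2(E_{J_2}F^\mu)$, after which (Z5) for $W_1$ finishes the proof. The only differences are cosmetic (you spell out the commutation bookkeeping and do not restrict to the case $FX_{IJ}F\neq 0$, which the paper does but does not need).
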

\begin{proof}
Edge elements for transversal edges $I\leftrightarrows J$ are of course in $\Psi$ by definition. If $I\leftarrow J$ is an inclusion edge, say $I_1\supsetneq J_1$ and $F^{\lambda\boxtimes\mu} X_{IJ} F^{\lambda\boxtimes\mu} \neq 0$, then we repeat an argument from above:
\begin{align*}
	0 &\neq F^{\lambda\boxtimes\mu} X_{IJ} F^{\lambda\boxtimes\mu} \\
	&= \iota_2(F^\mu) \iota_1(F^\lambda) \iota_2(E_{I_2}) \iota_1(X_{I_1 J_1})\iota_2(E_{J_2}) \iota_1(F^{\lambda}) \iota_2(F^{\mu}) \\
	&= \iota_2(F^\mu E_{I_2}) \iota_1(F^\lambda X_{I_1 J_1} F^{\lambda}) \iota_2(E_{J_2} F^{\mu}) \tag{3}\label{maintheorem:Z5}
\end{align*}
If (Z5) holds for $W_1$ then $F^\lambda X_{I_1 J_1} F^{\lambda} \in \Psi_1$ and so the whole term is in $\Psi$. Similarly the same holds for inclusion edges with $I_2\supsetneq J_2$. This proves (Z5) for $W$.
\end{proof}

\begin{lemma}
If $W_1$ and $W_2$ satisfy (Z1) -- (Z6), then $W$ also satisfies (Z4).
\end{lemma}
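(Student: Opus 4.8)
The plan is to identify the compression $F^{\lambda\boxtimes\mu}\,kW\,F^{\lambda\boxtimes\mu}$ with the tensor product $(F^\lambda\,kW_1\,F^\lambda)\otimes_k(F^\mu\,kW_2\,F^\mu)$ as a $k$-algebra and then feed in (Z4) for $W_1$ and $W_2$ together with the standard isomorphism $M_m(k)\otimes_k M_n(k)\isomorphic M_{mn}(k)$ and the identity $d_{\lambda\boxtimes\mu}=d_\lambda d_\mu$. As in the statement of the conjecture, $kW$ (resp.\ $kW_1$, $kW_2$) denotes the image of $H$ (resp.\ $H_1$, $H_2$) inside $k\Omega$ (resp.\ $k\Omega_1$, $k\Omega_2$) specialised at $v=1$, so that $\iota(T_s)$ specialises to $1-2e_s+x_s$. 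Specialising part 3 of Lemma \ref{tensor_morphism} at $v=1$ gives $\tau(w_1w_2)=w_1\otimes w_2$ for $w_1\in W_1$, $w_2\in W_2$ under the canonical identifications, and from the definition of $\tau$ on the generators together with $F^{\lambda\boxtimes\mu}=\iota_1(F^\lambda)\iota_2(F^\mu)$ one obtains $\tau(F^{\lambda\boxtimes\mu})=F^\lambda\otimes F^\mu$.

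The first step is the inclusion $F^{\lambda\boxtimes\mu}\,k\Omega\,F^{\lambda\boxtimes\mu}\subseteq k\Psi$. Since $\Omega$ is generated by the $E_I$ and the edge elements $X_{IJ}^s$, this space is spanned by $F^{\lambda\boxtimes\mu}E_IF^{\lambda\boxtimes\mu}$ and by compressed paths $F^{\lambda\boxtimes\mu}X_{I_0I_1}X_{I_1I_2}\cdots X_{I_{r-1}I_r}F^{\lambda\boxtimes\mu}$; inserting $1=\sum_\nu F^\nu$ between consecutive edge elements and using (Z3) for $W$ — available from the previous lemmas of this section — forces every intermediate idempotent to equal $F^{\lambda\boxtimes\mu}$, so each compressed path is a product of elements $F^{\lambda\boxtimes\mu}X_{IJ}F^{\lambda\boxtimes\mu}$, each of which lies in $k\Psi$ by (Z5) for $W$, while $F^{\lambda\boxtimes\mu}E_IF^{\lambda\boxtimes\mu}=E_IF^{\lambda\boxtimes\mu}\in k\Psi$ by (Z2) and (Z6). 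The identical argument applied to $W_1$ and $W_2$ gives $F^\lambda\,k\Omega_1\,F^\lambda\subseteq k\Psi_1$ and $F^\mu\,k\Omega_2\,F^\mu\subseteq k\Psi_2$; in particular $F^\lambda\,kW_1\,F^\lambda$ and $F^\mu\,kW_2\,F^\mu$ are subalgebras of $k\Psi_1$ and $k\Psi_2$ respectively.

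Next, for arbitrary $w_1\in W_1$ and $w_2\in W_2$ I would compute, using the two formulae for $\tau$ noted above,
\[\tau\big(F^{\lambda\boxtimes\mu}\,w_1w_2\,F^{\lambda\boxtimes\mu}\big)=(F^\lambda\,w_1\,F^\lambda)\otimes(F^\mu\,w_2\,F^\mu).\]
On the other hand $\tau$ is, after base change to $k$, left inverse to the isomorphism $j\colon k\Psi_1\otimes_k k\Psi_2\to k\Psi$ of Theorem \ref{tensor_morphism:decomposition_psi}, so the right-hand side also equals $\tau\big(j\big((F^\lambda w_1 F^\lambda)\otimes(F^\mu w_2 F^\mu)\big)\big)$. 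Both $F^{\lambda\boxtimes\mu}w_1w_2F^{\lambda\boxtimes\mu}$ (by the first step) and $j\big((F^\lambda w_1 F^\lambda)\otimes(F^\mu w_2 F^\mu)\big)$ lie in $k\Psi$, on which $\tau$ is injective, whence $F^{\lambda\boxtimes\mu}w_1w_2F^{\lambda\boxtimes\mu}=j\big((F^\lambda w_1 F^\lambda)\otimes(F^\mu w_2 F^\mu)\big)$. Since the products $w_1w_2$ span $kW$, this yields $F^{\lambda\boxtimes\mu}\,kW\,F^{\lambda\boxtimes\mu}=j\big((F^\lambda kW_1 F^\lambda)\otimes(F^\mu kW_2 F^\mu)\big)$, so $j$ restricts to a $k$-algebra isomorphism $(F^\lambda kW_1 F^\lambda)\otimes_k(F^\mu kW_2 F^\mu)\isomorphic F^{\lambda\boxtimes\mu}kWF^{\lambda\boxtimes\mu}$. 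Composing this with $\psi_\lambda\otimes\psi_\mu$ from (Z4) for $W_1$, $W_2$ and precomposing with the canonical isomorphism $k^{d_\lambda d_\mu\times d_\lambda d_\mu}\isomorphic k^{d_\lambda\times d_\lambda}\otimes_k k^{d_\mu\times d_\mu}$ produces the required surjective $k$-algebra morphism $\psi_{\lambda\boxtimes\mu}\colon k^{d_{\lambda\boxtimes\mu}\times d_{\lambda\boxtimes\mu}}\twoheadrightarrow F^{\lambda\boxtimes\mu}kWF^{\lambda\boxtimes\mu}$, which is (Z4) for $W$.

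I expect the main obstacle to be making the second step legitimate at all: a priori the group algebra element $w_1w_2$ does not lie in $\Psi$ (its $x_s$-components involve inclusion edges), and $\iota_1(\Omega_1)$ does not commute with $\iota_2(\Psi_2)$, so one cannot simply commute idempotents past group algebra elements. The point that rescues the argument is that compressing by $F^{\lambda\boxtimes\mu}$ lands one inside $\Psi$ — precisely what (Z3) and (Z5) for $W$ (established in the preceding two lemmas) provide — and on $\Psi$ the morphism $\tau$ is an isomorphism identifying everything with $\Psi_1\otimes\Psi_2$, so the computation reduces to the factors. The remaining verifications — the two displayed formulae for $\tau$, the behaviour of $j$ on the relevant subalgebras, and the matrix-algebra bookkeeping — are routine.
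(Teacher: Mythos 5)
Your argument is correct, but it establishes the key corner isomorphism by a different mechanism than the paper. The paper's proof never passes through the containment $F^{\lambda\boxtimes\mu}k\Omega F^{\lambda\boxtimes\mu}\subseteq k\Psi$: instead it invokes Theorem \ref{tensor_morphism:kernel} (that $\ker(\tau)$ is generated by the edge elements $X_{IJ}$ with both $I_1\supsetneq J_1$ and $I_2\supsetneq J_2$), shows by the computation \eqref{maintheorem:Z5} together with Theorem \ref{tensor_morphism:decomposition_psi} that each such generator satisfies $F^{\lambda\boxtimes\mu}X_{IJ}F^{\lambda\boxtimes\mu}=0$, and then uses (Z3) for $W$ to kill the whole compressed kernel, so that $\tau$ restricts to an isomorphism $F^{\lambda\boxtimes\mu}k\Omega F^{\lambda\boxtimes\mu}\to F^\lambda k\Omega_1 F^\lambda\otimes F^\mu k\Omega_2 F^\mu$ and one may set $\psi_{\lambda\boxtimes\mu}=\tau^{-1}\circ(\psi_\lambda\otimes\psi_\mu)$. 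You instead prove the stronger structural fact that compression by $F^{\lambda\boxtimes\mu}$ lands in $k\Psi$ (path decomposition, insertion of $\sum_\nu F^\nu$, antisymmetry of $\preceq$ via (Z3), and (Z5) for $W$), and then transport through the mutually inverse pair $(\tau_{|\Psi},j)$ of Theorem \ref{tensor_morphism:decomposition_psi}; this bypasses Theorem \ref{tensor_morphism:kernel} entirely, and indeed your containment implies the paper's key vanishing, since $\ker(\tau)\cap k\Psi=0$. What each buys: the paper's route is shorter because the description of $\ker(\tau)$ is already available; yours is slightly more self-contained at this point and exhibits the corner algebra explicitly inside $\Psi\isomorphic\Psi_1\otimes\Psi_2$. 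Two small points to flag. First, you read $kW$ in (Z4) as the $v=1$ specialisation of $\iota(H)$ inside $k\Omega$, whereas the paper's own proof works with $F^\lambda k\Omega_1F^\lambda$, $F^{\lambda\boxtimes\mu}k\Omega F^{\lambda\boxtimes\mu}$; this ambiguity is the paper's, and your argument is insensitive to it (your first step is already about $k\Omega$, and the identification restricts to the group-algebra corners), as long as the same reading is used in hypothesis and conclusion. Second, your passing claim that $F^\lambda kW_1F^\lambda$ is a subalgebra is not a consequence of the containment in $k\Psi_1$ alone, since $F^\lambda$ need not lie in $kW_1$; it does follow from (Z4) for $W_1$ (it is the image of an algebra morphism), or one can avoid it altogether by only using that $j\circ(\psi_\lambda\otimes\psi_\mu)$ is an algebra morphism into $k\Omega$ with image $F^{\lambda\boxtimes\mu}kWF^{\lambda\boxtimes\mu}$, which is all (Z4) requires.
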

\begin{proof}
Choose surjections $\psi_\lambda: k^{d_\lambda\times d_\lambda}\twoheadrightarrow F^\lambda k\Omega_1F^\lambda$ and $\psi_\mu: k^{d_\mu\times d_\mu}\twoheadrightarrow F^\mu k\Omega_2 F^\mu$. Then $\psi_\lambda\otimes\psi_\mu$ is a surjection $k^{d_\lambda\times d_\lambda} \otimes k^{d_\mu\times d_\mu} \twoheadrightarrow F^\lambda k\Omega_1 F^\lambda \otimes F^\mu k\Omega_2 F^\mu$.

On the other hand, $\ker(\tau)\cap F^{\lambda\boxtimes\mu} \Omega F^{\lambda\boxtimes\mu} = F^{\lambda\boxtimes\mu} \ker(\tau) F^{\lambda\boxtimes\mu} = 0$ which can be seen as follows:

For all inclusion edges $I\leftarrow J$ with two proper inclusions $I_1\supsetneq J_1$ and $I_2\supsetneq J_2$, we have
\begin{align*}
F^{\lambda\boxtimes\mu} X_{IJ} F^{\lambda\boxtimes\mu} &\overset{\eqref{maintheorem:Z5}}{=} \iota_2(F^\mu E_{I_2}) \iota_1(F^\lambda X_{I_1 J_1} F^{\lambda}) \iota_2(E_{J_2} F^{\mu}) \\
&=  \iota_1(F^\lambda X_{I_1 J_1} F^{\lambda}) \iota_2(F^\mu E_{I_2}) \iota_2( E_{J_2} F^{\mu}) \\
&=  \iota_1(F^\lambda X_{I_1 J_1} F^{\lambda}) \iota_2(F^\mu \underbrace{E_{I_2} E_{J_2}}_{=0} F^{\mu})
\end{align*}
where we have used Theorem \ref{tensor_morphism:decomposition_psi} and (Z5) for $W_1$. Using (Z3) for $W$ and \ref{tensor_morphism:kernel} conclude $F^{\lambda\boxtimes\mu} \ker(\tau) F^{\lambda\boxtimes\mu}=0$.

\medbreak
Therefore $F^{\lambda\boxtimes\mu} k\Omega F^{\lambda\boxtimes\mu} \xrightarrow{\tau} F^\lambda k\Omega_1 F^\lambda \otimes F^\mu k\Omega_2 F^\mu$ is an isomorphism and $\psi_{\lambda\boxtimes\mu}:=\tau^{-1} \circ (\psi_\lambda\otimes\psi_\mu)$ is a surjection $k^{d_{\lambda\boxtimes\mu}\times d_{\lambda\boxtimes\mu}} \twoheadrightarrow F^{\lambda\boxtimes\mu} \Omega F^{\lambda\boxtimes\mu}$.
\end{proof}

\begin{proof}[Proof of the main theorem \ref{maintheorem:statement}]
Combined the preceeding lemmas prove (Z1) -- (Z6) for $W$ from (Z1) -- (Z6) for both $W_1$ and $W_2$.
\end{proof}

\begin{lemma}
If $W_1$ and $W_2$ satisfy (Z1), (Z2), (Z3), (Z5) and (Z6), then $\ker(\tau)$ is a nilpotent ideal, more precisely:

$\ker(\tau)^k=0$ whenever $k>\min\Set{\operatorname{ht}(\Irr(W_1),\preceq_1),\operatorname{ht}(\Irr(W_2),\preceq_2)}$ where $\operatorname{ht}$ denotes the maximal length of a chain in a partial order.
\end{lemma}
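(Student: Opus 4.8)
The plan is to combine Theorem~\ref{tensor_morphism:kernel} with the idempotent decomposition $1=\sum_{\lambda,\mu}F^{\lambda\boxtimes\mu}$ and a chain‑length estimate in the two posets $(\Irr(W_i),\preceq_i)$. By Theorem~\ref{tensor_morphism:kernel} — whose reduction of $\ker(\tau)$ to the diagonal inclusion edges only uses the defining relations and therefore survives base change to $k$ — the ideal $\ker(\tau)\subseteq k\Omega$ is generated as a two‑sided ideal by $Z:=\Set{X_{IJ}\mid I_1\supsetneq J_1 \text{ and } I_2\supsetneq J_2}$. Hence $\ker(\tau)^k$ is spanned by alternating products $\omega_0z_1\omega_1z_2\cdots z_k\omega_k$ with $z_i\in Z$ and $\omega_i\in k\Omega$, and, inserting the complete orthogonal idempotent system on both sides of every $z_i$ and at the two ends, it suffices to show that each idempotent‑sandwiched summand
\[F^{\nu_0}\,\omega_0\,F^{\alpha_1\boxtimes\beta_1}\,z_1\,F^{\gamma_1\boxtimes\delta_1}\,\omega_1\,F^{\alpha_2\boxtimes\beta_2}\,z_2\cdots z_k\,F^{\gamma_k\boxtimes\delta_k}\,\omega_k\,F^{\nu_{k+1}}\]
vanishes as soon as $k>\min\{\operatorname{ht}(\Irr(W_1),\preceq_1),\operatorname{ht}(\Irr(W_2),\preceq_2)\}$.

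The crucial input — and the step where I expect the real work to lie — is a \emph{strict} refinement of Step~2a of the partial‑order lemma: for a diagonal inclusion edge $z=X_{IJ}$ one has
\[F^{\lambda\boxtimes\mu}\,X_{IJ}\,F^{\lambda'\boxtimes\mu'}\neq 0\ \implies\ \lambda\prec_1\lambda'\ \text{and}\ \mu\prec_2\mu'.\]
Since (Z3) for $W$ already yields $\lambda\preceq_1\lambda'$ and $\mu\preceq_2\mu'$, only strictness is at issue, and by symmetry of the hypotheses (and of $Z$) in $W_1$ and $W_2$ it is enough to exclude $\mu=\mu'$. To do so I would choose $t\in I_2\setminus J_2$, use $X_{IJ}=E_I\iota_2(X_{I_2J_2}^t)E_J$ together with $E_I=\iota_1(E_{I_1})\iota_2(E_{I_2})$ and $E_J=\iota_1(E_{J_1})\iota_2(E_{J_2})$, and move the $\iota_2(\Psi_2)$‑factors $F^\mu, E_{I_2}, E_{J_2}$ past the $\iota_1(\Psi_1)$‑factors via Theorem~\ref{tensor_morphism:decomposition_psi} and (Z6) to obtain
\[F^{\lambda\boxtimes\mu}\,X_{IJ}\,F^{\lambda'\boxtimes\mu}=\iota_1(F^\lambda E_{I_1})\,\iota_2\bigl(F^\mu X_{I_2J_2}^t F^\mu\bigr)\,\iota_1(E_{J_1}F^{\lambda'}).\]
Here $X_{I_2J_2}^t$ is an \emph{inclusion} edge of $\mathcal{Q}_{W_2}$, hence \emph{not} in $\Psi_2$, so $\iota_2(X_{I_2J_2}^t)$ carries no a priori commutation with $\iota_1(\Psi_1)$; the point of the rearrangement is precisely to trap it between two copies of $\iota_2(F^\mu)$, after which (Z5) — applied to the \emph{same} idempotent $F^\mu$ on both sides — gives $F^\mu X_{I_2J_2}^t F^\mu\in\Psi_2$. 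Commuting this middle factor to the left past $\iota_1(F^\lambda E_{I_1})$ then collects a factor $\iota_1(F^\lambda E_{I_1}E_{J_1}F^{\lambda'})$, which vanishes because $E_{I_1}E_{J_1}=0$ (as $I_1\supsetneq J_1$ forces $I_1\neq J_1$). Getting this absorption and the resulting commutations exactly right is the main obstacle.

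Granting the claim, the conclusion is pure bookkeeping. In a non‑vanishing sandwiched summand the connector factors $F^{\gamma_i\boxtimes\delta_i}\omega_i F^{\alpha_{i+1}\boxtimes\beta_{i+1}}$ force $\gamma_i\preceq_1\alpha_{i+1}$ and $\delta_i\preceq_2\beta_{i+1}$ by (Z3) (using that $\preceq$ is the product order), while each of the $k$ occurrences of an edge of $Z$ forces $\alpha_i\prec_1\gamma_i$ and $\beta_i\prec_2\delta_i$ by the claim. Projecting onto the second coordinate gives $\beta_1\prec_2\delta_1\preceq_2\beta_2\prec_2\delta_2\preceq_2\cdots\preceq_2\beta_k\prec_2\delta_k$, hence a strictly increasing chain $\beta_1\prec_2\delta_1\prec_2\cdots\prec_2\delta_k$ of length $k$ in $(\Irr(W_2),\preceq_2)$, so $k\le\operatorname{ht}(\Irr(W_2),\preceq_2)$; the first coordinate gives $k\le\operatorname{ht}(\Irr(W_1),\preceq_1)$ likewise. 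Thus if $k$ exceeds the minimum of these two heights every such summand vanishes, hence so does every spanning product of $\ker(\tau)^k$, i.e. $\ker(\tau)^k=0$.
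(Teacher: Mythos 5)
Your proof is correct and follows essentially the same route as the paper: reduce via Theorem~\ref{tensor_morphism:kernel} to the diagonal inclusion edges, use (Z6), Theorem~\ref{tensor_morphism:decomposition_psi} and (Z5) to trap the offending edge between equal idempotents and force a product $E_{I_1}E_{J_1}$ (resp.\ $E_{I_2}E_{J_2}$) to vanish, hence strict inequalities $\lambda\prec_1\lambda'$, $\mu\prec_2\mu'$, and then conclude by the chain-length count. Your ``strictness'' step is just the mirror image of the paper's argument (the paper shows $I_2=J_2$ when $\lambda=\lambda'$ using (Z5) for $W_1$; you show vanishing when $\mu=\mu'$ using (Z5) for $W_2$), so nothing essentially new or missing.
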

\begin{proof}
Consider an inclusion edge $I\supseteq J$, say $I_1\supsetneq J_1$.

Then $X_{IJ} = \iota_2(E_{I_2})\iota_1(X_{I_1 J_1}) \iota_2(E_{J_2})$, (Z5) for $W_1$ and Theorem \ref{tensor_morphism:decomposition_psi} imply
\begin{align*}
F^{\lambda\boxtimes\mu} X_{IJ} F^{\lambda\boxtimes\mu'} &= \iota_2(F^\mu E_{I_2}) \iota_1(F^\lambda X_{I_1 J_1} F^{\lambda}) \iota_2(E_{J_2} F^{\mu'}) \\
&=  \iota_1(F^\lambda X_{I_1 J_1} F^{\lambda}) \iota_2(F^\mu E_{I_2} E_{J_2} F^{\mu'})
\end{align*}
so that we infer $I_2=J_2$ if the left hand side is non-zero.

Analogously $I_2\supsetneq J_2$ and $F^{\lambda\boxtimes\mu} X_{IJ} F^{\lambda'\boxtimes\mu} \neq 0$ implies $I_1=J_1$.

\medbreak
If both inclusions $I_1\supsetneq J_1$ and $I_2\supsetneq J_2$ are proper and $F^{\lambda\boxtimes\mu} X_{IJ} F^{\lambda'\boxtimes\mu'}\neq 0$, then $\lambda\preceq_1\lambda'$ and $\mu\preceq_2\mu'$ by (Z3). And by the above conclusions, neither $\lambda=\lambda'$ nor $\mu=\mu'$ can be true. Therefore these must be proper inequalities.

Because $\ker(\tau)$ is generated by those $X_{IJ}$ with both $I_1\supsetneq J_1$ and $I_2\supsetneq J_2$ by \ref{tensor_morphism:kernel}, this shows that $\ker(\tau)^k=0$ if $k$ is greater than the length of any chain in $\Irr(W_1)$ or $\Irr(W_2)$.
\end{proof}

\bibliography{references}
\end{document}